\theoremstyle{plain}
\newtheorem{theorem}{Theorem}[section]
\newtheorem{lemma}[theorem]{Lemma}
\newtheorem{proposition}[theorem]{Proposition}
\newtheorem{conjecture}[theorem]{Conjecture}
\theoremstyle{definition}
\newtheorem{definition}[theorem]{Definition}
\newtheorem{examplewr}[theorem]{Example}
\newtheorem{ass}[theorem]{Assumption}
\theoremstyle{remark}
\newtheorem{obswr}[theorem]{Observation}
\newtheorem{remarkwr}[theorem]{Remark}
\newenvironment{remark}{\begin{remarkwr}\begin{upshape}}{\end{upshape}\end{remarkwr}}
\renewcommand\leq{\leqslant}\renewcommand\geq{\geqslant}
\newcommand{\sha}{L\!\!L\!\!I}
\newcommand{\zreal}{\mathbb{R}}
\newcommand{\zinteger} {\mathbb{Z}}
\newcommand{\zrational} {\mathbb{Q}}
\newcommand{\zZ}{\zinteger}
\newcommand{\zQ}{\zrational}
\newcommand{\zR}{\zreal}
\newcommand\Z{\ensuremath{\mathbb Z}}\newcommand\A{\ensuremath{\mathbb A}}
\newcommand\Q{\ensuremath{\mathbb Q}}\newcommand\R{\ensuremath{\mathbb R}}
\newcommand\C{\ensuremath{\mathbb C}}\newcommand\F{\ensuremath{\mathbb F}}
\newcommand\Qb{{\overline\Q}}
\newcommand\cM{\ensuremath{\mathcal M}}
\newcommand\disc{\operatorname{disc}}
\newcommand\End{\operatorname{End}}
\newcommand\Frob{\operatorname{Frob}}
\newcommand\Gal{\operatorname{Gal}}
\newcommand\GL{\operatorname{GL}}
\newcommand\Hom{\operatorname{Hom}}
\newcommand\im{\operatorname{im}}
\newcommand\Ind{\operatorname{Ind}}
\newcommand\M{\operatorname{M}}
\newcommand\ord{\operatorname{ord}}
\newcommand\Res{\operatorname{Res}}
\newcommand\Tr{\operatorname{Tr}}
\newcommand\Norm{\operatorname{Nm}}
\def\M{\operatorname{M}}
\def\p{\mathfrak p}\def\P{\mathbb P}
\newcommand{\comp}{\begin{picture}(6,5)(-3,-2)\put(0,1){\circle{2}} \end{picture}}\def\circ{\comp}
\newcommand{\ra}{\rightarrow}
\newcommand{\lra}{\longrightarrow}
\DeclareMathOperator{\sign}{sign}
\DeclareMathOperator{\rank}{rank}
\newcommand{\cO}{\mathcal{O}}
\begin{document}

\title[Almost totally complex points on elliptic curves]{Almost totally complex points on elliptic curves}

\author{Xavier Guitart, Victor Rotger and Yu Zhao}

\begin{abstract} Let $F/F_0$ be a quadratic extension of totally real number fields, and let  $E$ be an elliptic curve
over $F$ which is isogenous to its Galois conjugate over $F_0$. A quadratic extension $M/F$ is said to be almost
totally complex (ATC) if all archimedean places of $F$ but one extend to a complex place of $M$.  The main goal of this note is to
provide a new construction of a supply of Darmon-like points on $E$, which are conjecturally  defined over certain
ring class fields of $M$. These points are constructed by means of an extension of  Darmon's ATR method to higher
dimensional modular abelian varieties, from which they inherit the following features: they are algebraic provided
Darmon's conjectures on ATR points hold true, and  they are explicitly computable, as we illustrate with a detailed
example that provides certain numerical evidence for the validity of our conjectures.
\end{abstract}

\address{X. G.: Departament de Matem\`{a}tica Aplicada II, Universitat Polit\`{e}cnica de Catalunya, C.
Jordi Girona 1-3, 08034 Barcelona, Spain and Max Planck Institute for Mathematics, Vivatsgasse 7, 53111 Bonn, Germany}
\email{xevi.guitart@gmail.com}

\address{V. R.: Departament de Matem\`{a}tica Aplicada II, Universitat Polit\`{e}cnica de Catalunya, C. Jordi Girona
1-3, 08034 Barcelona, Spain}
\email{victor.rotger@upc.edu}

\address{Y.Z.: Department of Mathematics, John Abbott College, Montreal, Quebec, H9X 3L9, Canada } 
\email{yu.zhao@johnabbott.qc.ca}

\maketitle

\tableofcontents

\section{Introduction \label{zintroduction}}

Let $E$ be an elliptic curve defined over a number field $F$ and, for any field extension $K/F$, let $L(E/K, s)$ denote
the Hasse-Weil $L$-function of the base change of $E$ to $K$, which is known to converge on the half-plane $\{ s\in \C:
\mathrm{Re}(s)>\frac{3}{2}\}$.

The Mordell-Weil theorem asserts that the abelian group $E(K)$ of $K$-rational points on $E$ is finitely generated, that
is to say,
$$
E(K) \simeq T \times \zZ^r,
$$
where $T$ is a finite group and $r=r(E/K)\geq 0$ is a non-negative integer, which is called the {\em Mordell-Weil rank}
of
$E/K$.

There are two conjectures which stand out as cornerstones in the arithmetic of elliptic curves:

\vspace{0.2cm}

{\bf Conjecture (MOD).} {\em The elliptic curve $E/K$ is modular: there exists an automorphic representation $\pi$ of
$\GL_2(\A_K)$ such that $L(E/K,s-\frac{1}{2})=L(\pi,s)$. In particular, $L(E/K,s)$ can be analytically continued to an
entire function on the complex plane and it satisfies a functional equation relating the values at $s$ and $2-s$.}

\vspace{0.2cm}

{\bf Conjecture (BSD).} Assume that (MOD) holds for $E/K$ and let $r_{an}(E/K)=\ord_{s=1} L(E/K,s)$ denote the order of
vanishing  of $L(E/K, s)$ at $s=1$, which we call the {\em analytic rank} of $E/K$. Then
\begin{equation*}
r(E/K)\overset{?}{=}r_{an}(E/K).
\end{equation*}

Conjecture (MOD) is nowadays known to hold, under mild hypothesis, when $F$ is totally real and $K/F$ is Galois with
solvable Galois group, thanks to the work of Wiles, Skinner-Wiles, Langlands and others. More precisely, when $F$ is
totally real, $E$ is known to be modular by \cite{wiles}, \cite{bcdt}, \cite{SkWi}, unconditionally if the base field is
$F=\Q$ and under some technical conditions on the reduction type at the primes of $F$ above $3$ when $[F:\Q]>1$. In this
setting, this amounts to saying that there exists a Hilbert modular eigenform $f_E$ of parallel weight $2$ over $F$ such
that $L(E/F,s)$ is equal to the L-function $L(f_E,s)$ associated with that form. If $K/F$ is solvable, then (MOD)
follows from the modularity of $E$ over $F$ by applying Langlands's cyclic base change. If $F=\Q$ and $K$ is a totally
real Galois number field, recent work of Dieulefait \cite{Di} proves (MOD) under simple local assumptions on $K$, and
one can expect that similar techniques may lead in the future to a similar result for arbitrary totally real fields $F$.

In light of these results, we assume throughout that $F$ is totally real and $E$ is modular. Let $\mathfrak N$ denote
the conductor of $E$, an integral ideal of $F$, which for simplicity we assume to be square-free.

Thanks to the work of Kolyvagin, Gross-Zagier and Zhang, Conjecture (BSD) is then known to hold when $K$ is either $F$
or a totally imaginary extension of $F$, $(\mathfrak N,\disc(K/F))=(1)$, $r_{an}(E/K)\leq 1$ and the {\em
Jacquet-Langlands (JL) hypothesis} holds:

\begin{enumerate}
\item[(JL)] Either $[F:\Q]$ is odd or $\mathfrak N \ne (1)$.
\end{enumerate}

In particular, when $K$ is a totally imaginary extension of $F$ and $r_{an}(E/K)=1$, the above result implies that if
(JL) is satisfied, there exists a non-torsion point in $E(K)$. Precisely when (JL) holds, such a point $P_K$, a
so-called {\em Heegner point}, can be manufactured by means of the theory of complex multiplication on Shimura curves,
and it is Gross-Zagier \cite{gross-zagier} and Zhang \cite{Zh} who showed that the hypothesis $r_{an}(E/K)=1$ implies
that $P_K$ is not torsion. Finally, Koyvagin's method \cite{kolyvagin} of Euler systems is the device which permits to
show that in fact there are no points in $\Q\otimes E(K)$ which are linearly independent of $P_K$, thereby showing
(BSD). This is made possible thanks to the existence, along with the point $P_K$, of a system
$$
\{ P_c \in E(H_c), c\geq 1, (c,\disc(K/F))=1\}
$$
of rational points on $E$ over the {\em ring class field} $H_c/K$, the abelian extension of $K$ associated by class
field theory to the Picard group $\mathrm{Pic}(\mathcal O_c)$ of invertible ideals in the order $\mathcal O_c\subset K$
of conductor $c$ of $K$.

That this supply of points should exist can be predicted using Conjecture (BSD), even if $K$ is not totally imaginary,
as we now explain. Let $K/F$ be {\em any} quadratic field extension such that $(\mathfrak N,\disc(K/F))=1$. Write
\begin{equation}
\label{eqn:heegner-fact}
\mathfrak N=\mathfrak N^+\cdot \mathfrak N^-,
\end{equation}
where $\mathfrak N^+$ (resp. $\mathfrak N^-$) is the product of the prime divisors of $\mathfrak N$ which split (resp.
remain inert) in $K$.

Let $\chi: \Gal(K^{ab}/K)\ra \C^{\times}$ be a character of finite order and conductor relatively prime to $\mathfrak
N$. Let $r_1(K/F)$ and $r_2(K/F)$ be the number of archimedean places of $F$ which extend to a couple of real (resp.\,to
a complex) place(s) of $K$, so that $[F:\Q]=r_1(K/F)+r_2(K/F)$. Then the sign of the functional equation of the
L-function $L(E/K,\chi,s)$ of $E/K$ twisted by $\chi$ is
\begin{equation}
\label{sign-over-H}
\sign(E/K)= \sign(E/K, \chi) = (-1)^{r_2(K/F)+\sharp \{\wp \mid \mathfrak N^-\}},
\end{equation}
independently of the choice of $\chi$.

For any abelian extension $H/K$, let $\hat{\Gal}(H/K)=\Hom(\Gal(H/K),\C^\times)$ denote the group of characters of
$\Gal(H/K)$. The L-function of the base change of $E$ to $H$ factors as
$$
L(E/H,s) = \prod_{\chi\in \hat{\Gal}(H/K)} L(E/K,\chi,s).
$$

The Birch and Swinnerton-Dyer conjecture (BSD) in combination with \eqref{sign-over-H} gives rise to the following:

\begin{conjecture}\label{BSDheuristics}
Assume $\sign(E/K)=-1$ and let $H/K$ be an abelian extension, unramified at the primes dividing $\mathfrak N$. Then
\begin{equation}
\rank E(H)\overset{?}{=} [H:K],
\end{equation}
if and only if $L'(E/K,\chi,1)\ne 0$ for all $\chi\in \Hom(\Gal(H/K),\C^\times)$.
\end{conjecture}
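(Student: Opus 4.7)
The plan is to deduce the conjecture formally from (BSD) applied to $E/H$, together with the Artin factorization of $L(E/H,s)$ and the sign calculation recorded in \eqref{sign-over-H}. First, Artin formalism provides
\[
L(E/H,s) \;=\; \prod_{\chi \in \hat{\Gal}(H/K)} L(E/K,\chi,s),
\]
whence
\[
\ord_{s=1} L(E/H,s) \;=\; \sum_{\chi \in \hat{\Gal}(H/K)} \ord_{s=1} L(E/K,\chi,s).
\]
Granting (MOD) and (BSD) for the base change $E/H$, the left-hand side equals $\rank E(H)$.

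Next, I would invoke \eqref{sign-over-H}. The hypothesis that $H/K$ is unramified at the primes dividing $\mathfrak N$ guarantees that every $\chi \in \hat{\Gal}(H/K)$ has conductor coprime to $\mathfrak N$, so the sign of the functional equation for $L(E/K,\chi,s)$ equals $\sign(E/K) = -1$ independently of $\chi$. Consequently each twisted L-function vanishes to odd order at $s=1$, so $\ord_{s=1} L(E/K,\chi,s) \geq 1$ for every $\chi$. Combining the two observations,
\[
\rank E(H) \;=\; \sum_{\chi \in \hat{\Gal}(H/K)} \ord_{s=1} L(E/K,\chi,s) \;\geq\; [H:K],
\]
with equality if and only if every term in the sum equals exactly one, i.e.\ $L'(E/K,\chi,1) \neq 0$ for all $\chi \in \hat{\Gal}(H/K)$. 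This is precisely the content of the conjecture.

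The main obstacle is that the derivation rests on the unproven (BSD) conjecture applied to $E/H$, which is precisely why the statement is formulated as a Conjecture rather than a Theorem. A secondary technical point worth verifying is that the local Euler factors of $L(E/K,\chi,s)$ at primes dividing $\mathfrak N$ contribute no cancelling zero or pole at $s=1$, so that the order of vanishing at $s=1$ is genuinely controlled by the global functional-equation sign; this uses the unramification hypothesis on $H/K$ in the same way that it was used to apply \eqref{sign-over-H} uniformly in $\chi$.
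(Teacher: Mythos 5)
Your derivation is correct and reproduces exactly the heuristic the paper has in mind: the paper's only ``proof'' is the single sentence ``The Birch and Swinnerton-Dyer conjecture (BSD) in combination with \eqref{sign-over-H} gives rise to the following,'' and you have filled in precisely those steps (Artin factorization of $L(E/H,s)$, BSD for $E/H$, and the constant sign from \eqref{sign-over-H} forcing each factor to vanish to odd order, hence $\geq 1$, with equality throughout iff every $L'(E/K,\chi,1)\neq 0$). One subtlety worth being aware of, which the paper itself elides: the implication ``sign $-1$ $\Rightarrow$ odd order of vanishing at the center'' is automatic only for self-dual $\chi$ (i.e.\ $\bar\chi = \chi\circ\sigma$ with $\sigma$ generating $\Gal(K/F)$, the anticyclotomic case); for a general abelian $H/K$ the functional equation pairs $L(E/K,\chi,s)$ with $L(E/K,\bar\chi,2-s)$ and a sign of $-1$ alone does not force $L(E/K,\chi,1)=0$. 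In the paper this is harmless because $H$ is always taken to be a (narrow) ring class field, so the characters in play are ring class characters and the sign formula \eqref{sign-over-H} is genuinely $\chi$-independent; your proof inherits the same implicit hypothesis.
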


No proven result is known about Conjecture \ref{BSDheuristics} beyond the achievements of Gross-Zagier, Kolyvagin and
Zhang in the case $r_2(K/F)=[F:\Q]$ mentioned above. In spite of this, a plethora of {\em conjectural} constructions of
points have been proposed so far in various settings beyond the classical one. These points are commonly called {\em
Stark-Heegner points}, or {\em Darmon points}, as it was H. Darmon in \cite{Dar00} who first introduced them.

Since then, several authors \cite{Das}, \cite{Gr}, \cite{LRV}, \cite{DL}, \cite{Ga} have proposed variations of Darmon's
theme, always giving rise to a recipe that allows to attach, to a given abelian extension $H/K$ satisfying the
hypothesis of Conjecture \ref{BSDheuristics}, a point
\begin{equation}\label{PH}
P_H \in E(H_v),
\end{equation}
rational over the completion $H_v$ of $H$ at some finite or archimedean place $v$ of $H$, which is conjectured to
satisfy the following properties:

\begin{enumerate}
\item[(\mbox{SH}1)] $P_H\overset{?}{\in } E(H)$,

\item[(\mbox{SH}2)] For any character  $\chi: \Gal(H/K)\ra \C^\times$, the point $$P_\chi:= \sum_{\sigma \in \Gal(H/K)}
\chi(\sigma)^{-1}\sigma( P_H)\in E(H)\otimes_{\Z} \C$$ is non-zero if and only if $L'(E/K,\chi,1)\ne 0$, and

\item[(\mbox{SH}3)]  there is a reciprocity law describing the action of $\Gal(H/K)$ on $P_H$ in terms of
ideal theory.
\end{enumerate}

The main result of this paper is a new, computable construction of a supply of Darmon-like points in a setting that was
not computationally accessible before. Before describing our contribution in more detail, and being  the constructions
of Darmon points dispersed in the literature, we take the chance to report on the state of
the art of this question. Namely, explain which cases of Conjecture \ref{BSDheuristics} are already covered by the union
of those constructions, and which ones remain intractable.

Keep the above notations and the assumptions of conjecture
\ref{BSDheuristics}, and assume that
$H$ is the narrow ring class field associated with some order in $K$. Then:

\begin{enumerate}

\item[a)] If $r_1(K/F)=0$, $r_2(K/F)=[F:\Q]$, then assumption $\sign(E/K)=-1$ implies that (JL) holds, and conjecture
\ref{BSDheuristics} holds thanks to \cite{gross-zagier}, \cite{kolyvagin} and \cite{Zh}.

\item[b)] If $\sharp \{\wp \mid \mathfrak N^-\}\geq 1$, points $P_H\in E(H_\wp)$ have been constructed in \cite{Dar00},
\cite{Gr} and \cite{LRV}, for which conditions (\mbox{SH}1), (\mbox{SH}2) and (\mbox{SH}3) above have been conjectured.

Some theoretical evidence has been provided for them when $F=\Q$ in \cite{BeDa}, \cite{GSS} and \cite{LoVi}.

Numerical evidence has been given in \cite{Dar00} when $F=\Q$ and $N^-=1$.

\item[c)] If $r_1(K/F)\geq 1$, $r_2(K/F)\geq 1$ let us distinguish two possibilities:

\begin{enumerate}

\item[c1)] If $r_2(K/F)=1$, $K/F$ is called an {\em almost totally real} (ATR) quadratic extension and we let $v$ denote
the unique archimedean place of $F$ which extends to a complex place of $K$. Then $H_v=\C$ for any place of $H$ above it
and points $P_H\in E(H_v)$ have been constructed in \cite[Ch.\,VIII]{Drpmec}, for which conditions (\mbox{SH}1),
(\mbox{SH}2) and (\mbox{SH}3) above have been conjectured. These conjectures have been tested numerically in \cite{DL}.

\item[c2)] J. Gartner has extended the idea of Darmon \cite[Ch.\,VIII]{Drpmec} to any $K/F$ with $1\leq
r_2(K/F)<[F:\Q]$: in this more general setting, he constructs points $P_H\in E(H_v)$ and again conjectures that
(\mbox{SH}1), (\mbox{SH}2) and (\mbox{SH}3) hold true. His method does not appear to be amenable to explicit
calculations and as a consequence no numerical evidence has been provided for these conjectures.

\end{enumerate}
\end{enumerate}

Note that a), b), c) cover all cases contemplated in Conjecture \ref{BSDheuristics}. Indeed, the only case not covered
by b) arises when $\sharp \{\wp \mid \mathfrak N^-\}=0$, that is, all primes $\wp\mid \mathfrak N$ split in $K$. But
then assumption $\sign(E/K)=-1$ implies that $r_2(K/F)$ is odd, hence $r_2(K/F)\geq 1$. Then a) and c) cover
respectively the case in which $r_1(K/F)=0$ and $r_1(K/F)>0$.

The main contribution of this article is an explicitly computable, construction of a supply of points $P_M\in E(\C)$ in a
setting which lies within c2), but
which is completely different to the one proposed by Gartner. It only works under the following restrictive hypothesis:
\begin{itemize}
\item $F$ contains a field $F_0$ with $[F:F_0]=2$,

\item $E/F$ is $F$-isogenous to its Galois conjugate over $F_0$, and

\item $M$ is an almost totally complex quadratic extension of $F$, that is to say, $r_2(M/F)=[F:\Q]-1$.
\end{itemize}

While this setting is obviously much less general than the one considered in \cite{Ga}, it enjoys the following
features:
\begin{itemize}
\item Numerical approximations to the points $P_M$ are computable, as we illustrate with a fully detailed explicit
example in \S \ref{subsection: numerical examples}.

As we explain in \S\ref{subsection: construction
of the points}, our construction relies on the computation of certain ATR cycles on Hilbert modular varieties. To the best of our knowledge, at present there is available an algorithm for computing such ATR cycles only when the level is {\em trivial} (see \S\ref{subsection: Darmon's algorithm for ATR points} for more details). However, in our setting the level is always nontrivial, and so far this stands as the single issue which prevents our method from being completely automatized.  In the example of \S\ref{subsection: numerical examples} we circumvent the lack of a
general algorithm with an ad hoc computation.
\item We prove that the points $P_M$ belong to $E(M)$ and that they are non-torsion if and only if
$L'(E/M,1)\neq 0$  provided (SH1), (SH2) and (SH3) hold true for  ATR extensions of $F_0$: see Theorem \ref{conjecture:
the point in E is non-torsion} for the precise statement. This is worth remarking, as the conjectures for ATR
extensions can be tested numerically in practice: see \S \ref{subsection: Darmon's algorithm for ATR points} for
a sketch of the algorithm, and \cite{DL}, \cite{GM} for explicit numerical examples.
\end{itemize}

The main source of inspiration for the construction presented here is the previous work \cite{DRZ} of two of the authors with Henri Darmon, in which Heegner points on quotients of the modular curve $X_1(N)$ were used to manufacture ATR points on elliptic curves. 

{\em Acknowledgements.} We are thankful to Jordi Quer for computing for us the equation of the elliptic curve used in \S
\ref{subsec:quer}. Guitart wants to thank the Max Planck Institute for Mathematics for their 
hospitality and financial support during his stay at the Institute, where part of the present work has been carried out.
Guitart and Rotger
received financial support from DGICYT Grant MTM2009-13060-C02-01 and from 2009 SGR 1220.

\section{Quadratic points on modular abelian varieties}\label{section: Quadratic points on modular abelian vareties}

The basis of the main construction of this note --which we explain in \S \ref{secATC}-- lies in Darmon's conjectural theory of points on modular elliptic curves
over almost totally real (ATR) quadratic extensions of a totally real number field.

In a recent article, Darmon's theory has been generalized by Gartner \cite{Gartner-article} by considering {\em quaternionic} modular forms
with respect to not necessarily split quaternion algebras over the base field. Although we do not exploit Gartner's
construction here, our points do lie in a theoretical setting which is also covered by him and therefore the natural question arises
of whether Gartner's points are equal to ours when both constructions are available. We address this issue in \S
\ref{ComparingGartner}, where we point out that Conjecture (BSD) implies that one is a non-zero multiple of the other;
the difference between them is that ours are numerically accessible, and this stands as the main motivation of this
article.

This section is devoted to review the work of Darmon and Gartner, settling on the way the notations that
shall be in force for the rest of this note. As Gartner's exposition \cite{Ga}, \cite{Gartner-article} is already an
excellent  account of the theory, we choose here to reword it in the classical language of Hilbert modular forms, under
the simplifying hypothesis that the narrow class number of the base field $F_0$ is $1$.

In doing so, we take the chance to contribute to the theory with a few novel aspects. To name one, it will be convenient for our purposes to work with the natural, relatively straight-forward extension of the theory
to the setting of eigenforms with not necessarily trivial nebentypus and whose eigenvalues generate a number field of
arbitrarily large degree over $\Q$. This will lead us to a construction of rational points on higher-dimensional modular
abelian varieties of $\GL_2$-type.

\subsection{Quadratic extensions and $L$-functions}\label{sec:quadratic}

Let $F_0\subset \R$ be a totally real number field, together with a fixed embedding into the field of real numbers. Write $d=[F_0:\Q]$ for its degree over $\Q$ and let $R_0\subset F_0$ denote its ring of
integers. In order to keep our notations simple, we assume that the narrow class number of $F_0$ is $1$.

Let $N$ be a square-free integral
ideal of $F_0$ and let $\psi$ be a Hecke character of conductor $N$.
Let $f_0\in S_2(N,\psi)$ be a normalized Hilbert eigenform of parallel weight $2$, level $N$ and nebentypus $\psi$. Let
$\Q_{f_0}$ denote the number field generated by the eigenvalues of the Hecke operators acting on $f_0$, which we regard
as embedded in the algebraic closure $\bar\Q$ of $\Q$ in the field $\C$ of complex numbers; for each $\sigma \in
\Hom(\Q_{f_0},\bar\Q)$, there exists a unique normalized eigenform ${}^{\sigma}f_0$ whose family of eigenvalues is equal
to the family of eigenvalues of $f_0$ conjugated by $\sigma$.

The following standard conjecture is a generalized form of the
Eichler--Shimura philosophy:

\begin{conjecture}\label{ass1}
There exists an abelian variety $A=A_{f_0}/F_0$ of dimension $g=[\Q_{f_0}:\Q]$ and conductor $N^g$ such that $\Q\otimes
\End_{F_0}(A)\simeq \Q_{f_0}$, and whose L-series factors as
\begin{equation}\label{Lf}
L(A,s) = \prod_{\sigma \in \Hom(\Q_{f_0},\bar\Q)} L({}^{\sigma}f_0,s).
\end{equation}
\end{conjecture}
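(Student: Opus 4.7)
The plan is to separate the problem according to whether the Jacquet--Langlands (JL) dichotomy provides a Shimura curve, because this is the only case in which the abelian variety arises naturally from the Jacobian of a one-dimensional moduli space.

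First, suppose we are in the JL case, i.e.\ either $d=[F_0:\Q]$ is odd or the level $N$ has a prime divisor. Choose a quaternion algebra $B$ over $F_0$ ramified at all archimedean places except one and at a set of finite places compatible with the local factors of $\psi$ (for instance, at a prescribed prime of $N$ when $d$ is even, or at the empty set of finite places when $d$ is odd). Via Jacquet--Langlands, the eigenform $f_0$ corresponds to a quaternionic automorphic form $f_0^B$ of parallel weight $2$ on a Shimura curve $X^B$ attached to $B$ and to a level structure carrying the nebentypus $\psi$ (a $\Gamma_1$-type open compact subgroup of $\widehat B^\times$, since $\psi$ is nontrivial). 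I would then consider the Jacobian $J^B = \operatorname{Jac}(X^B)/F_0$, equipped with its action of the Hecke algebra $\mathbb T$, and cut out the $f_0$-isotypic component
\[
A = A_{f_0} := J^B/I_{f_0}J^B, \qquad I_{f_0}:=\ker(\mathbb T\to \Q_{f_0},\; T\mapsto a_T(f_0)).
\]
Standard Eichler--Shimura--Shimura theory shows that $\Q\otimes\End_{F_0}(A)\supseteq \Q_{f_0}$ and that $\dim A = g = [\Q_{f_0}:\Q]$; here I would verify the strict equality on endomorphisms by using that $f_0$ is a newform and hence its Hecke packet $\{{}^\sigma f_0\}$ is precisely the Galois orbit of $f_0$, ensuring each Galois conjugate contributes one dimension to $A$. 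The factorization \eqref{Lf} then follows from Carayol's computation of the local Galois representation $\rho_{f_0,\ell}$ attached to $f_0$ and from the identification of $V_\ell A \otimes_{\Q_{f_0}\otimes\Q_\ell}\overline{\Q}_\ell$ with the direct sum $\bigoplus_\sigma \rho_{{}^\sigma f_0,\ell}$; the same local computation gives conductor $N^g$.

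The main obstacle is the complementary case in which JL fails, namely $d$ even and $N = (1)$. Then the automorphic representation of $\GL_2(\A_{F_0})$ attached to $f_0$ does not transfer to any quaternionic Shimura curve, and the associated Hilbert modular variety has dimension $d\geq 2$, so no Jacobian is available. In this situation one must first construct the compatible system $\{\rho_{f_0,\ell}\}$ of $\ell$-adic Galois representations by the work of Wiles, Taylor, Blasius--Rogawski and Carayol via congruences with cohomological forms, and then attempt to realize it in the Tate module of an honest abelian variety. Producing $A_{f_0}$ itself from the Galois representation (rather than merely a motive with the right $L$-function) is the crux of the problem, and this is where the statement remains conjectural beyond the JL range; I would in practice bypass it by always arranging the JL hypothesis to hold, as the paper does elsewhere through the condition (JL).

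Finally, I would verify that the construction is independent of the auxiliary choice of $B$ and of level subgroup by comparing $L$-functions, invoking Faltings' isogeny theorem to conclude that any two candidates for $A_{f_0}$ are $F_0$-isogenous, so that the statement of the conjecture is unambiguous.
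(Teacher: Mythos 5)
The statement you were asked to prove is labelled a \emph{conjecture} in the paper, and indeed the paper offers no proof of it: immediately after stating it the authors remark only that it ``is known to hold when (JL) is satisfied,'' that when (JL) fails it is not even known whether a motive with the prescribed $L$-function exists (citing Blasius--Rogawski), and then they \emph{assume} the conjecture for the remainder of the section. Your write-up lands on essentially the same assessment: in the (JL) regime one constructs $A_{f_0}$ as the $f_0$-isotypic quotient of the Jacobian of a suitable Shimura curve via Jacquet--Langlands and Eichler--Shimura (with Carayol giving the $L$-factorization and conductor, and Faltings giving well-definedness up to isogeny), while outside (JL) the statement is genuinely open. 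So you are not proving the conjecture any more than the paper does, but you have correctly reconstructed the dichotomy it implicitly relies on.

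Two small points worth flagging. First, your closing remark that one ``bypasses'' the issue ``by always arranging the JL hypothesis to hold, as the paper does elsewhere'' slightly misdescribes the paper: the paper does not restrict to (JL) here, it simply assumes Conjecture \ref{ass1} outright, and its later construction (\S 3) takes place on a Hilbert modular variety with $B=\M_2(F_0)$ rather than on a Shimura curve. Second, in your JL-case construction you propose ramifying $B$ at a prime $\wp\mid N$ when $d$ is even; but in the setting of the paper the nebentypus $\psi$ has conductor exactly $N$ (square-free), so at each $\wp\mid N$ the local component $\pi_\wp$ is a ramified principal series rather than special or supercuspidal, and Jacquet--Langlands does not permit ramifying $B$ at such a place. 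The paper itself glosses over this point, since its formulation of (JL) is inherited from the trivial-nebentypus case; but if you intend to spell out the construction in the nontrivial-nebentypus generality that this section adopts, you should either justify the existence of a finite place of discrete-series type or acknowledge that the Shimura-curve construction may not be available even when $d$ is even and $N\ne(1)$.
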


Note that, if such an $A$ exists, it is well-defined only up to isogenies.

Conjecture \ref{ass1} is known to hold when (JL) is satisfied.  When
(JL) fails it is not even known whether there exists a motive $M_{f_0}$ over $F$ whose L-function is \eqref{Lf} and one
certainly does not expect the motive $h^1(E)$ to arise in the cohomology of any (quaternionic) Hilbert variety
(cf.\,\cite{BlRo} and  for more details). See \cite{De} for the numerical verification of Conjecture \ref{ass1} in several instances in which (JL) fails.

We shall {\em assume for the remainder of this section} that Conjecture \ref{ass1} holds true.

\vspace{0.3cm}

Let $K/F_0$ be a quadratic extension such that $(\disc(K/F_0),N)=1$ and $r_2(K/F_0)\geq 1$. Label the set of embeddings
of $F_0$ into the field $\R$ of real numbers as
$$
\{ v_1,v_2,...,v_r,v_{r+1},...,v_{d}: F_0\hookrightarrow \R\}, \quad 1\leq r \leq d
$$ in such a way that
\begin{itemize}
\item $v_1$ is the embedding fixed at the outset that we use to identify $F_0$ as a subfield of $\R$,
\item each of the places $v_2,...,v_r$ extends to a pair of real places of $K$, which by a slight abuse of notation we denote $v_j$ and
$v_j'$ for each $j=2,...,r$, and
\item each of the places $v_1, v_{r+1}, ...,v_{d}$ extends to a complex place on $K$, that we still denote with the same letter; we use $v_1$ to regard $K$ as a subfield of $\C$.
\end{itemize}

\begin{definition}\label{defATC}
If $r=1$, the set $\{ v_2,...,v_r \}$ is empty and $K/F_0$ is a CM-field
extension.

If $r=2$ we call $K/F_0$ an almost totally complex (ATC) extension.

If $r=d$ we have $\{v_1,
v_{r+1}, ...,v_{d}\}=\{v_1\}$ and we say that $K/F_0$ is almost totally real (ATR).
\end{definition}

Letting $\varepsilon_K$ denote the quadratic Hecke character of $F_0$ associated with the extension $K/F_0$, the
L-function of the base change of $A$ to $K$ is
$$
L(A/K,s) = L(A,s) \cdot L(A,\varepsilon_K, s) = \prod_{\sigma \in \Hom(\Q_{f_0},\bar\Q)} L({}^{\sigma}f_0,s)\cdot
L({}^{\sigma}f_0,\varepsilon_K,s).
$$
It extends to an entire function on $\C$ and satisfies a functional equation relating the values at $s$ with $2-s$.
Assume  that the sign of the functional equation of $L(f_0/K,s)=L(f_0,s)\cdot
L(f_0,\varepsilon_K,s)$ is $-1$. This is equivalent to saying that the set
\begin{equation}\label{ram}
\{v_{r+1},...,v_d\} \cup \{ \wp \mid N, \wp \mbox{ inert in } K\}
\end{equation}
has {\em even} cardinality.

Let $B$ be the (unique, up to isomorphism) quaternion algebra over $F_0$ whose set of places of ramification is
$\mathrm{Ram}(B)=$\eqref{ram}. In particular we have $B\otimes_{F_0,v_j} \R \simeq \M_2(\R)$ for $j=1,...,r$, and the choice of such isomorphisms gives rise to an embedding
\begin{equation}\label{embv_j}
(v_1,...,v_r): B^\times \hookrightarrow \GL_2(\R)\times \overset{(r)}{...}\times \GL_2(\R)\subset (B\otimes_\Q
\R)^\times.
\end{equation}

 Let $N^+$ be the product of primes in $F_0$ such that divide $N$ and are split in $K$, and $N^-$ the
product of primes that divide $N$ and remain inert in $K$. Choose an Eichler order $\mathcal O$ of level $N^+$ in
$B$ together with, for each prime $\wp\mid N^+$,
isomorphisms $i_{\wp}: B\otimes F_{0,\wp}\simeq M_2(F_{0,\wp})$ such that
$$
i_\wp(\cO) = \{ \begin{smallmat}
abcd
\end{smallmat}, \wp \mid c \}\subseteq \M_2(R_{0,\wp}).
$$

\begin{definition} Let $F_0^+$ denote the subgroup of $F_0^\times$ of totally positive elements and $B^+$ be the subgroup of elements in $B^\times$ whose reduced norm lies in $F_0^+$. Define the congruence subgroups
$$
\Gamma_0=\Gamma_0^{N^-}(N^+) = \cO^\times\cap B^+\quad \mbox{and}$$ $$\Gamma_1 =\Gamma_1^{N^-}(N^+) = \{
\gamma \in \Gamma_0, i_\wp(\gamma) \cong \begin{smallmat}
1\star01
\end{smallmat}, \wp\mid N^+\} \subset \Gamma_0.
$$
\end{definition}

Through \eqref{embv_j}, $\Gamma_1$ acts on the cartesian product $\mathcal H^r=\mathcal H_1\times ...\times \mathcal
H_r$ of $r$ copies of
Poincar\'e's upper-half plane and we let $X_\C = \Gamma_1\backslash \mathcal H^r$ denote its quotient, which has a
natural structure of analytic manifold with finitely many isolated singularities.

\begin{definition}
Let $F_0^{\mathrm{gal}}$ denote the galois closure of $F_0$ in $\C$ and
view the places $v_i$ as elements of the Galois group $G=\Gal(F_0^{\mathrm{gal}}/\Q)$, so that $v_1=\mathrm{Id}$. The {\em reflex field of $B$} is the subfield $F_0^\star$ of $F_0^{\mathrm{gal}}$ fixed by the subgroup of those $\sigma \in G$ such that
$\sigma \cdot \{v_{1},...,v_r\} = \{v_{1},...,v_r\}$.
\end{definition}

The cases one encounters most often in the literature arise when either $r=1$, where $F_0^\star=F_0$, or when $r=d$, in which case $F_0^\star=\Q$.

Let $$X=X_1^{N^-}(N^+)/F_0^\star$$ denote
Shimura's canonical model over $F_0^\star$ of $X_\C$, as introduced e.g. in \cite[\S 12]{Milne}. If $\mathrm{Ram}(B)\ne \emptyset$, $X_\C$ is compact and $X$ is projective over $F_0^\star$, while if $\mathrm{Ram}(B)=\emptyset$ then $B=\M_2(F_0)$ and $X_\C$ admits a canonical compactification by adding a finite number of cusps; by an abuse of notation, we continue to denote $X$ the resulting projective model.

\subsection{Oda-Shioda's conjecture}\label{sec:OdaShioda}

Let $\Sigma = \{ \pm 1\}^{r-1}$ and for each $\epsilon=(\epsilon_2,...,\epsilon_r)\in \Sigma$, let $\gamma_\epsilon \in
\cO^\times$ be an element such that $v_j(n(\gamma_\epsilon)) = \det(v_j(\gamma_\epsilon))>0$ if $j=1$ or $\epsilon_j=+1$, and
$v_j(n(\gamma_\epsilon))<0$ if $\epsilon_j=-1$. Such elements exist thanks to our running assumption that the narrow
class number of $F_0$ is $1$. For $\tau_j\in \mathcal H_j$, set
$$
\tau_j^{\epsilon} = \begin{cases}
v_j(\gamma_\epsilon) \tau_j \mbox{ if } j=1 \mbox{ or } \epsilon_j=+1, \\
v_j(\gamma_\epsilon) \bar{\tau}_j \mbox{ if } \epsilon_j=-1.
\end{cases}
$$

For each $0\leq i\leq 2r$, let $H_i(X_\C, \Z) = Z_i(X_\C,\Z)/B_i(X_\C,\Z)$ denote the $i$-th Betti homology group of
$X_\C$. Attached to $f_0$ there is the natural holomorphic $r$-form on $\mathcal H^r$ given by
$$
\omega_{f_0} =  (2\pi i)^r f_0(\tau_1,...,\tau_r) d\tau_1 ... d\tau_r,
$$
which is easily shown to be $\Gamma_1$-invariant (and to extend to a smooth form on the cusps, if $B=\M_2(F_0)$), giving
rise to a regular differential $r$-form $\omega_{f_0}\in H^0(X_\C, \Omega^r)$.

Label the set $\Hom(\Q_{f_0},\C) = \{ \sigma_1, ..., \sigma_g\}$ of embeddings of $\Q_{f_0}$ into the field of complex
numbers. The set $\{ \sigma_1(\omega_{f_0}), ..., \sigma_g(\omega_{f_0})\} $ is then a basis of the $f_0$-isotypical component
of $H^0(X, \Omega^r)$.

\begin{definition}\cite[(8.2)]{Drpmec}, \cite[\S 2]{Ga}\label{def-omegabeta} Let $d_0$ be a totally positive generator of the different ideal
of $F_0$ and let $\beta\colon \Sigma\ra \{\pm 1\}$ be a character. The differential $r$-form $\omega^\beta_{f_0}$ on $X$
associated with $f_0$ and $\beta$ is
$$
 \omega_{f_0}^\beta := |d_0|^{-1/2} (2\pi i)^r\sum_{\epsilon \in \Sigma}
\beta(\epsilon)f_0(\tau_1^{\epsilon},...,\tau_r^\epsilon)
d\tau_1^\epsilon ... d\tau_r^\epsilon.
$$
\end{definition}

If $r=1$, note that the only choice for $\beta$ is the trivial one and in this case one recovers the usual holomorphic $1$-form $\omega_{f_0}$ on the Shimura curve $X/F_0$. On the other hand, when $r>1$, the differential form $\omega_{f_0}^\beta$ is not holomorphic anymore for any choice of $\beta$, including the trivial one.

\begin{definition}
The {\em lattice of periods} of $\omega_{f_0}^\beta$ is
$$
\Lambda_{f_0}^\beta = \{ (\int_{\tilde Z} \sigma_1(\omega_{f_0}^\beta), ..., \int_{\tilde Z} \sigma_g(\omega_{f_0}^\beta)), \, \tilde Z
\in H_r(X_\C,\Z)\} \subseteq  \C^g.
$$
\end{definition}

In addition to that, under the running assumption that conjecture \ref{ass1} holds true, we can also introduce another
lattice as follows. For each $j=1,...,r$, let $A_j=A\times_{F_0,v_j} \C$  denote the base change of $A$ to the field of
complex numbers via the embedding $F_0\overset{v_j}{\hookrightarrow} \R \subset \C$. Since we identify
$v_1$ with the identity embedding, $A_1$ is identified with $A$. Let $H_1(A_j,\Z)^{\pm}$ be the
$\Z$-submodule of $H_1(A_j,\Z)$ on which complex conjugation acts as $+1$ (resp. $-1$). Since $\Q_{f_0}\simeq \Q\otimes
\End_{F_0}(A)$, there is a natural action of $\Q_{f_0}$ on $H_1(A_j,\Q)^\pm$ and in fact the latter is a free module of
rank $1$ over the former.

Similarly, the space $H^0(A,\Omega^1)$ of global regular differential 1-forms on $A$ is an $F_0$-vector space of
dimension $g$ equipped with a $F_0$-linear action of $\Q_{f_0}$ inherited from the isomorphism $\Q_{f_0}\simeq \Q\otimes
\End_{F_0}(A)$.

Recall that $R_0$ stands for the ring of integers of $F_0$. Make the following choices:
\begin{itemize}
\item A regular differential $\omega_A \in H^0(A,\Omega^1)$ which extends to a smooth differential on the N\'eron model
of $A$ over $R_0$ and generates $H^0(A,\Omega^1)$ as a $\Q_{f_0}$-module.
\item For each $j=1,...,r$, generators $c^+_j$, $c^-_j$ of $H_1(A_j,\Q)^+$ and $H_1(A_j,\Q)^-$ as $\Q_{f_0}$-modules.
\end{itemize}

Given these choices, define
$$
\Omega_j^+ = \int_{c_j^+} v_j(\omega_A)\in \C, \quad \Omega_j^- = \int_{c_j^-}
v_j(\omega_A)\in \C, \ \mbox{for} \ \, j=1,...,r \quad \mbox{ and }$$
$$
\Omega_{\beta}=\Omega_2^{\beta_2(-1)}\cdot ...\cdot \Omega_r^{\beta_r(-1)}.
$$

\begin{definition}
Let $R_{f_0}$ denote the ring of integers of $\Q_{f_0}$ and define
$$
\Lambda_0^\beta := \Omega_{\beta} \cdot (\Z \Omega_1^+ +
\Z\Omega_1^-)\subset \C,\quad \Lambda^\beta_A := \Lambda_0^\beta\otimes_\Z
R_{f_0} \subseteq \C\otimes_\Z \Q_{f_0}\simeq \C^g.
$$
\end{definition}

Let us now analyze how these lattices depend on the above choices. Note that $\omega_A$ is well-defined only up to
multiplication by units $u\in R_0^\times$ and non-zero endomorphisms $t\in \Q_{f_0}^\times$. If we replace $\omega_A$ by
$u\cdot \omega_A$, we obtain
$$
\Lambda_0^\beta (u\cdot \omega_A) = \langle \int_{c_1^+} v_1(u\cdot
\omega_A)\cdot\prod_{j=2}^r \int_{c_j^{\beta_j(-1)}} v_j(u\cdot \omega_A),\int_{c_1^-} v_1(u\cdot
\omega_A)\cdot\prod_{j=2}^r \int_{c_j^{\beta_j(-1)}} v_j(u\cdot \omega_A)\rangle =
$$
$$
= \mathrm{N}_{F_0/\Q}(u) \Lambda_0^\beta (\omega_A) = \Lambda_0^\beta (\omega_A),
$$
because $\mathrm{N}_{F_0/\Q}(u)=\pm 1$, and thus also $\Lambda^\beta_A(u\cdot \omega_A) =\Lambda^\beta_A(\omega_A)$.

If instead we replace $\omega_A$ by $t\cdot \omega_A$ for some $t\in \Q_{f_0}^\times$, then
$$
\Lambda^\beta_A(t \omega_A)=\{ \Omega_{\beta} \int_{c_1^+} v_1(t^*\omega_A)\otimes
s,\Omega_{\beta} \int_{c_1^-} v_1(t^*\omega_A)\otimes s, s\in
R_{f_0}\} =
$$
$$
= \{ \Omega_{\beta} \int_{c_1^+} v_1(\omega_A)\otimes s t,
\Omega_{\beta} \int_{c_1^-} v_1(\omega_A)\otimes s t, s\in R_{f_0}\}
$$
and therefore $\Q\otimes_\Z\Lambda^\beta_A(t \omega_A) =\Q\otimes_\Z\Lambda^\beta_A(\omega_A)$. We reach to the same
conclusion if we take different choices of homotopically equivalent paths $c_j^+$ or $c_j^-$. Hence the $\Q$-submodule
$\Q\otimes_\Z\Lambda_A^\beta$ of $\C^g$ is determined uniquely independently of the choices made.

\begin{conjecture}[Oda, Yoshida]\cite{Od}\label{Oda-conj}
The lattices $\Lambda_{f_0}^\beta$ and $\Lambda_A^\beta$ are commensurable, that is to say, $$\Q\otimes_\Z
\Lambda_{f_0}^\beta=\Q\otimes_\Z \Lambda_A^\beta,$$
and therefore there exists an isogeny of abelian varieties
$$
\eta_\beta: \C^g/\Lambda_{f_0}^\beta \overset{\sim}{\lra }  \C^g/\Lambda_A^\beta \simeq A(\C)=\C^g/\Lambda_1,
$$
where the last isomorphism is given by multiplication by $\Omega_{\beta}^{-1}$.
\end{conjecture}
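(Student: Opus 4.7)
The plan is to prove the conjecture by establishing a Hodge-theoretic identification between the $f_0$-isotypical component of $H^r(X_\C,\Q)$ and a tensor product of first cohomologies of the Galois conjugates $A_j$, under which the differentials $\omega_{f_0}^\beta$ correspond to pure tensors of $\pm 1$-eigencomponents of the $v_j(\omega_A)$ for complex conjugation.

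First I would invoke a generalized Matsushima--Shimura decomposition to write the cusp-form contribution to $H^r(X_\C,\C)$ as a direct sum of Hecke-eigencomponents. The $f_0$-component $V_{f_0}\subset H^r(X_\C,\C)$ has dimension $2^r \cdot g$ over $\C$, and the forms $\sigma_i(\omega_{f_0}^\beta)$, as $i$ ranges over $\Hom(\Q_{f_0},\C)$ and $\beta$ over characters of $\Sigma$, provide a natural basis of $V_{f_0}$; moreover, this basis is adapted to the action of complex conjugation at each of the real places $v_2,\ldots,v_r$, which generates a $\Sigma$-action on $V_{f_0}$.

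Second, I would seek to match $V_{f_0}$ with $\bigotimes_{j=1}^{r} H^1(A_j,\Q)\otimes_\Z \Q_{f_0}$ as a $\Q_{f_0}$-Hodge structure on which $\Sigma$ acts (by complex conjugation on the appropriate factor). The existence of such an identification on the rational level is a motivic incarnation of the Jacquet--Langlands--Shimura correspondence. Granting it, the form $\omega_{f_0}^\beta$ corresponds, up to a $\Q_{f_0}^\times$-scalar that absorbs the normalization by $|d_0|^{-1/2}$, to the pure tensor $v_1(\omega_A)\otimes \bigotimes_{j=2}^{r} v_j(\omega_A)^{\beta_j(-1)}$, where $v_j(\omega_A)^{\pm}$ denotes the component of $v_j(\omega_A)$ in the $\pm$-eigenspace of complex conjugation on $H^1(A_j,\R)$. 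By K\"unneth, an integral basis of the $f_0$-isotypical part of $H_r(X_\C,\Z)$ is spanned over $\Q$ by products of cycles $c_j^{\pm}\in H_1(A_j,\Z)^{\pm}$, and pairing such a product cycle with $\omega_{f_0}^\beta$ yields a $\Q_{f_0}$-multiple of $\Omega_1^{\pm}\cdot \Omega_\beta$. This identifies $\Q\otimes \Lambda_{f_0}^\beta$ with $\Q\otimes \Lambda_A^\beta$ inside $\C^g$, giving the isogeny $\eta_\beta$.

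The main obstacle is the second step: producing a \emph{rational} (rather than merely transcendental) identification between $V_{f_0}$ and the motivic tensor product $\bigotimes_j H^1(A_j)_{f_0}$ is a deep statement. In the classical split case $B=\M_2(F_0)$ it is essentially the content of Oda's original conjecture, and has been established in many instances using theta correspondences and see-saw dualities (Harris, Yoshida, and others), but remains open in general. In the quaternionic case one would hope to leverage a motivic Jacquet--Langlands correspondence to transfer the split-case statement to $X_1^{N^-}(N^+)$; under the (JL) hypothesis this reduction is plausible, but when (JL) fails --- precisely the setting of interest in \S\ref{section: Quadratic points on modular abelian vareties} and beyond --- even the existence of $A$ is conjectural (Conjecture~\ref{ass1}) and no direct geometric comparison is presently available. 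For this reason the commensurability is most naturally regarded as an additional working hypothesis in what follows.
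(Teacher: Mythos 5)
You have misread the role of this statement in the paper: Conjecture \ref{Oda-conj} is precisely that, a \emph{conjecture}, attributed to Oda and Yoshida, which the authors do not attempt to prove. The paper simply adopts it as a working assumption for the rest of \S\ref{section: Quadratic points on modular abelian vareties}, and its only commentary is the one-line Remark immediately after, observing that when $r=1$ and (JL) holds the statement is a theorem because $A$ can be realized as a quotient of the Jacobian of the Shimura curve $X$, whence the two lattices are commensurable by construction. There is therefore no ``paper's own proof'' to compare against; the comparison you should have in mind is between your sketch and that Remark.

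As a strategy outline, what you have written is a reasonable account of \emph{why} one expects the conjecture to hold and of where the difficulty lies, and you do honestly flag at the end that the crucial step is not a step you can carry out. But as written it should not be presented as a ``proof proposal'' at all, because your argument is circular. Your second step asks for a rational identification of $V_{f_0}$ with $\bigotimes_{j} H^1(A_j,\Q)\otimes \Q_{f_0}$ as $\Q_{f_0}$-Hodge structures; that is not an auxiliary lemma you can invoke, it \emph{is} Oda's period conjecture in disguise. Once you grant it, the commensurability is nearly tautological. Similarly, your appeal to K\"unneth to describe $H_r(X_\C,\Z)_{f_0}$ in terms of products of cycles $c_j^{\pm}$ only makes sense \emph{after} you have transported everything through that same rational identification: $X_\C$ is a quotient of $\mathcal H^r$ by $\Gamma_1$, not a product of the $A_j$, so there is no K\"unneth decomposition of $H_r(X_\C,\Z)$ available a priori. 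In short, you have reduced the conjecture to itself. Were this a theorem in the paper the gap would be fatal; since it is a conjecture, the appropriate move is exactly what the authors do --- cite \cite{Od}, record the $r=1$ unconditional case, and assume the rest --- rather than dress the assumption up as an argument.
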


Note that, consistently with Conjecture \ref{ass1}, the above Conjecture \ref{Oda-conj} only concerns the isogeny class of the abelian variety $A$.

\begin{remark}
If $r=1$ and (JL) holds, Conjecture \ref{Oda-conj} holds true: the abelian variety $A$ may be constructed explicitly as a constituent of the Jacobian of the Shimura curve $X$ and it follows from the very construction that the lattices $\Lambda_{f_0}^\beta$ and $\Lambda_A^\beta$ are commensurable.\end{remark}

\subsection{Darmon points}\label{sec:Darmonpoints}
Let $\mathcal{Z}_{r-1}(X_\C)$ denote the set of null-homologous cycles of real dimension $r-1$ in $X_\C$.
For each character $\beta$ as above, Conjecture \ref{Oda-conj} allows us to define the {\em topological Abel-Jacobi map}
\begin{equation}\label{AJtop}
\begin{matrix}
\mathrm{AJ}^\beta: & \mathcal{Z}_{r-1}(X_\C) &  \lra &  A(\C)\\
   & T & \mapsto & \eta_\beta\left(\int_{\tilde T } \omega_{f_0}^\beta\right),
\end{matrix}
\end{equation}
where  $\tilde T\in
C_r(X_\C,\Z)$ is any $r$-dimensional chain satisfying $\partial \tilde T=T$. Observe that $\tilde T$ is determined up to
elements in $H_r(X_\C,\Z)$, so that the quantity $\int_{\tilde T } \omega_{f_0}^\beta\in \C\otimes_\Z \Q_{f_0}$ is a
well-defined element in $\C^g/\Lambda_{f_0}^\beta$ and $\mathrm{AJ}^\beta$ is thus a well-defined map.

\begin{remark}
If $r=1$ and (JL) holds, the map $\mathrm{AJ}^\beta$ is nothing else but the classical {\em algebraic} Abel-Jacobi map
of curves $X_\C \lra
A(\C)$, which factors through the jacobian of $X_\C$. This was one of Darmon's motivations for extending the rule to
the general case, though the reader must be warned that when $r>1$ the maps $\mathrm{AJ}^\beta$ are {\em not}
algebraic.\end{remark}

Let now $c\subseteq R_0$ be an integral ideal of $F_0$ relatively coprime with $N$ and let $R_c := R_0 + cR_K\subseteq
R_K$ be the order of conductor $c$ in the ring of integers of $K$. Let $\eta$ be the homomorphism
\[
 \eta\colon \cO \lra R_0/N^+R_0
\]
sending an element $x\in \cO$ to the upper-left hand entry of its image in $\cO\otimes_{R_0} R_{0,N^+}\simeq \M_2(R_{0,N^+})$, taken modulo
$N^+R_{0,N^+}$.

\begin{definition} Fix a factorization of ideals $N^+ R_K= \mathfrak N^+ \cdot \bar{\mathfrak N}^+$.
An  embedding  of $R_0$-algebras $\varphi\colon R_c\hookrightarrow \cO$ is said to be \emph{optimal} if
$\varphi(R_c)=\varphi(K)\cap \cO$. We say that $\varphi$ is
\emph{normalized} (with respect to $\mathfrak N^+$) if it satisfies the following conditions:
\begin{enumerate}
 \item $\varphi$ acts on $u=(\tau_1,1)\in \C^2$ as $\varphi(a)_1\cdot
u=v_1(a)\cdot u$ for all
$a\in R_c$, where $\varphi(a)_1$ denotes the image of $\varphi(a)$ in $B\otimes_{F_0,v_1}\R$.
\item The kernel of $\eta\circ \varphi$ is equal to $\mathfrak N^+$.
\end{enumerate}
We denote by $\mathcal E(R_c,\mathcal O)$ the set of normalized optimal embeddings.
\end{definition}

Recall that $v_1$ extends to a complex place of $K$ and that $v_2,\dots,v_r$ extend to real places. Given  $\varphi\in
\mathcal E(R_c,\mathcal O)$, the action of $K^\times$ on $\C$ by fractional linear transformations
via the composition
of $\varphi$ and the isomorphism $(B\otimes_{F_0,v_1}\R)^\times\simeq \GL_2(\R)$ has a unique fixed point $z_1\in
\mathcal H_1$.
For $j=2,\dots,r$ it has  two fixed points $\tau_j,\tau_j'\in \R=\partial \mathcal H_j$ under the isomorphism
$(B\otimes_{F_0,v_j}\R)^\times\simeq \GL_2(\R)$. Let $\gamma_j$ be the geodesic path joining $\tau_j$ and $\tau_j'$ in
$\mathcal H_j$.

\begin{definition}
We denote by $T_\varphi$ the $(r-1)$-real dimensional cycle
in $X_\C$ given by the image of the region
$$R_\varphi=\{z_1\}\times\gamma_2\times\cdots\times\gamma_r\subset \mathcal H_1\times
\stackrel{r)}{\cdots}\times \mathcal H_r$$
under the natural projection map $\mathcal H^r\lra X_\C$.
\end{definition}

Note that the stabilizer of $R_\varphi$ in $\Gamma_1$ is the subgroup $\Gamma_\varphi=\varphi(K)\cap \Gamma_1$ and
therefore there is a natural homeomorphism $T_\varphi\simeq \Gamma_\varphi \backslash R_\varphi$. As an application of
the Matsushima--Shimura Theorem \cite{MaSh}, it is easy to show (cf.\,\cite[Proposition 4.3.1]{Gartner-article}) that
the class of $T_\varphi$ has finite order in $H_{r-1}(X_\C,\Z)$. In particular, if $e$ denotes the order of
$T_\varphi$ then $eT_\varphi$ is null-homologous. This allows the following definition.

\begin{definition}\label{Darmon-point} The \emph{Darmon point} attached to $\varphi$ and $\beta$ is
$$P^\beta_\varphi:= \frac{1}{e}\mathrm{AJ}^\beta(eT_\varphi) \in A_1(\C).$$
\end{definition}

Darmon points are conjectured to be rational over certain number fields, with the Galois action given by an
explicit reciprocity law. This is the content of Conjecture \ref{ATR-gen}. Next we define the number
fields and the actions involved in the conjecture.

Define
$$
U_c: = {\hat R_0}^\times (1+c{\hat R_K}) \subset \hat{K}^\times=(K\otimes_\Z \hat \Z)^\times.
$$
For every character $\beta$ of $\Sigma$, let $H_c^\beta$ denote the abelian  extension of $K$ corresponding by class
field theory to the open compact subgroup $K_\infty^\beta \times U_c$, where
\begin{equation}\label{eq: def open compact subgroup}
K_\infty^\beta:=\prod_{j=1,r+1,...,d} \C^\times \times \prod_{\substack{j=2,...,r\\ \beta_j(-1)=+1}} \R^\times
\times\prod_{\substack{j=2,...,r\\\beta_j(-1)=-1}}\R_{>0}.
\end{equation}

Recall that $\psi$ denotes the Nebentypus of $f_0$. For $\varphi\colon
R_c\hookrightarrow \cO$ a normalized optimal embedding define
\[
 U_\varphi^+=\mathrm{ker}(U_c \lra (R_K/\mathfrak N^+R_K)^\times\simeq (R_0/N^+R_0)^\times \overset{\psi}{\ra}
\C^\times ),
\]
\[
 U_\varphi^-=\mathrm{ker}(U_c \lra (R_K/\bar{\mathfrak N}^+R_K)^\times \simeq (R_0/N^+R_0)^\times \overset{\psi}{\ra}
\C^\times  ).
\]
Denote by $L_\varphi^\beta/H_c^\beta$ (resp. ${L'}_\varphi^\beta/H_c^\beta$)  the abelian extension of the ring
class field of conductor $c$ associated to $K_\infty^\beta\times U_\varphi^+$ (resp. $U_\varphi^-\times
K_\infty^\beta$). Let also $U_\varphi=U_\varphi^+\cap U_\varphi^-$
and  let $H^\beta_\varphi/H^\beta_c$ be the extension associated to $K_\infty^\beta \times U_\varphi
\subset \A_K^\times$.

Observe that we can extend $\psi$ to a character on $\cO^\times$ by composing with $\eta$. Then we define
\[
\Gamma_1 \subseteq  \Gamma_\psi:=\Gamma_\psi^{N^-}(N^+):=\{\gamma\in \Gamma_0\ \colon \psi(\eta \gamma)=1\} \subseteq
\Gamma_0.
\]

The  group $\Gamma_0$ acts on $\mathcal E(R_c,\cO)$ by conjugation, and we denote by $\mathcal E(R_c,\cO)/\Gamma_0$ the
set of conjugacy classes. Any element $W_\varepsilon \in\Gamma_0\setminus \Gamma_\psi$ defines an involution on
$\mathcal E(R_c,\cO)/\Gamma_0$ which interchanges the preimages of the natural projection $\mathcal
E(R_c,\cO)/\Gamma_\psi\ra \mathcal E(R_c,\cO)/\Gamma_0$. In addition to $W_\varepsilon$, there is also an Atkin--Lehner
involution  acting naturally on the set of embeddings, although it does not preserve the normalization. To be more
precise, let $\omega_N$ be an element in $B$ such that
\begin{itemize}
\item for every $\wp\mid N^-$, $\omega_N$ generates the single two-sided ideal of $\cO\otimes R_{0,\wp}$ of norm $\wp$, and

\item for every $\wp\mid N^+$, $\iota_\wp(\omega_N)=\left( \begin{array}{cc} 0 & -1 \\ \pi_\wp & 0  \end{array}\right)$, where $\pi_\wp$ is any
uniformizer in $R_{0,\wp}$.

\end{itemize}
Let us denote by  $\bar{\mathcal{E}}(R_c,\cO)$ the set of optimal embeddings normalized with respect to
$\bar{\mathfrak{N}}^+$. Then the map $\varphi\mapsto W_N(\varphi):=\omega_N\varphi\omega_N^{-1}$ is a bijection between
${\mathcal{E}}(R_c,\cO)$ and $\bar{\mathcal{E}}(R_c,\cO)$. From now on  denote by $W_N(P_\varphi^\beta)$ the point
$P_{W_N(\varphi)}^\beta$.

Finally, there is also a natural action of $\hat K^\times$  on $\mathcal E(R_c,\cO)$, which works as follows. Pick a
finite id\`ele
$x\in \hat K^\times$ and an embedding $\varphi$ in $\mathcal E(R_c,\cO)$.  Since the class number of $\cO$ is
$h(\cO)=h(F_0^*)=1$ by \cite[Cor.\,5.7 bis]{Vigneras},
the fractional ideal $I_x=\varphi(x)\hat\cO\cap B$ is principal, generated by some $\gamma_{x}\in B^\times$ with
$n(\gamma_x)\in F_0^+$. Moreover, we can choose $\gamma_x$
such that $a_x=\varphi(x_{\mathfrak N^+}x_{\overline{\mathfrak N}^+})^{-1}\cdot
\gamma_x$ lies in the kernel of $\psi \eta$. (Indeed, note that, locally at the primes $\wp\mid N^+$, we have
$\varphi(x_{\mathfrak N^+}x_{\overline{\mathfrak N}^+})^{-1} I_{x,\wp} = \cO_\wp$ and thus $a_x$ belongs to
$\cO_\wp^\times$. It hence makes sense to consider its image under $\psi \eta$.  We can assume $\gamma_x$ is as claimed
by replacing it by a suitable unit in $\cO^\times$.)  We define $x\star
\varphi:=\gamma_{x}^{-1}\circ\varphi\circ \gamma_{x}$. Observe that $U_\varphi^+$
acts trivially on $\mathcal E(R_c,\cO)$.

For $y\in K_\infty^\times$ and a character $\beta \colon \Sigma\ra \{\pm 1\}$, set
\[
 \beta(y)=\prod_{j=2}^r \beta(\sign(\prod_{w|v_j} y_{w})).
\]

The following statement collects, in a precise form, the conjectures (SH1), (SH2), (SH3) that were somewhat vaguely formulated in the introduction for Darmon points over abelian extensions of $K$.

\begin{conjecture}\label{ATR-gen}
\begin{enumerate}
\item If $\varphi\in \mathcal
E(R_c,\cO)$ then $P^\beta_\varphi \overset{?}{\in } A_1(L^\beta_\varphi).$

\item For any character $\chi: \Gal(L^\beta_\varphi/K)\ra \C^\times$, the point  $$P_\chi:= \sum_{\sigma \in
\Gal(L^\beta_\varphi/K)} \chi(\sigma)^{-1} \sigma(P^\beta_\varphi) \in A_1(L^\beta_\varphi)\otimes_{\Z} \C$$ is non-zero
if and only if $L'(f_0/K,\chi,1)\ne 0$.

\item For any $a=(a_\infty,a_f)\in
\A_{K}^\times$
we
have that $\mathrm{rec}(a)P_\varphi^\beta=\beta(a_\infty)P_{a_f \star \varphi}$. In addition, for any $\tau\in
\Gal(H_\varphi^\beta/F_0)$ whose restriction to $K$ is not trivial, there exists an element $\sigma\in
\Gal(H_\varphi^\beta/K)$ such that
\[
 \tau(P_\varphi^\beta)=W_N(\sigma(P_\varphi^\beta)) \ \ (\mathrm{mod} \ A_1(H_\varphi^\beta)_{\mathrm{tors}}).
\]

\end{enumerate}
\end{conjecture}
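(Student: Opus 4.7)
Since the statement is labelled as a conjecture rather than a theorem, any ``proof'' can at best be a strategy; I will outline how I would try to produce a proof, what partial results are accessible, and where the serious obstacle lies.

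My first move would be to recalibrate on the case $r=1$, where Conjecture~\ref{ATR-gen} is a theorem. There the Shimura variety $X = X_1^{N^-}(N^+)$ is a curve over $F_0$, the differential $\omega_{f_0}^\beta$ is the usual holomorphic $1$-form, and under (JL) the Oda--Yoshida Conjecture~\ref{Oda-conj} is a theorem because $A$ arises as an isogeny factor of $\Jac(X)$. In that case $\mathrm{AJ}^\beta$ is nothing but the algebraic Albanese morphism of $X$, the cycles $T_\varphi$ reduce to the classical CM points $z_\varphi \in X(K^{\mathrm{ab}})$, and one reads off (1), (2), (3) from Shimura's theory of canonical models, the Gross--Zagier--Zhang formula, and the reciprocity law for CM divisors, respectively. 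So part of the plan is bookkeeping: isolate exactly which inputs from the $r=1$ setting are purely formal (definition of $\mathrm{rec}$, the $W_\varepsilon$/$W_N$ involutions, the description of $L^\beta_\varphi$ as a ring class field) and separate them from the genuinely arithmetic ingredients.

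Next I would try to reduce the general $r$ statement to the $r=1$ case by a variation of the device employed in the present paper. Concretely, whenever one has an intermediate totally real $F_0\subset F$ with $[F:F_0]=2$ and an auxiliary modular form on $F$ ``explaining'' $f_0$ via base change and a Galois-conjugate isogeny, the construction of \S\ref{secATC} rewrites an ATC cycle on an $(r-1)$-dimensional Hilbert variety as an ATR cycle on an $r$-dimensional one over the smaller base. Iterating such dimension-drops (or combining them with $p$-adic uniformisations \`a la \cite{Gr}, \cite{LRV} at a prime $\wp\mid N^-$) would, optimistically, let one bootstrap (1)--(3) from the case where only CM or $p$-adic Heegner points appear. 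For (2) one would then invoke a Gross--Zagier-type formula, and for (3) Shimura reciprocity, at each step. Part (1), the algebraicity, would follow formally from (2)+(3) once one knows the field of definition is at most $L_\varphi^\beta$.

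The hard part, and the reason this remains conjectural, is the input itself: for $r>1$ the map $\mathrm{AJ}^\beta$ is transcendental, since $\omega_{f_0}^\beta$ is not holomorphic and the target torus $\C^g/\Lambda^\beta_{f_0}$ is not an abelian variety in any natural algebraic sense. The rationality assertion (1) is therefore not a statement one can hope to prove by pure Hodge theory on $X$, and no analogue of the Kolyvagin/Gross--Zagier machine is available to push (2). I would view the only realistic attack as indirect: assume (2) (the non-vanishing characterization) in the cases where it is accessible, and try to upgrade to (1) and (3) via (a) specialisation at complex multiplication points where both sides can be compared with CM theory, and (b) matching with Gartner's quaternionic points, which by the comparison discussed in \S\ref{ComparingGartner} should differ from ours by a nonzero algebraic scalar. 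Absent an algebraic realisation of $\mathrm{AJ}^\beta$, numerical verification in explicit examples (as performed in \S\ref{subsection: numerical examples}) is at present the only source of direct evidence, and this is the role the present paper is designed to play.
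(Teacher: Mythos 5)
This statement is a \emph{conjecture}, not a theorem: the paper offers no proof of it, only a precise formulation, numerical evidence in \S\ref{subsection: numerical examples}, and the surrounding heuristic discussion. You correctly recognized that no proof is available, and your survey of the landscape --- the $r=1$ case being a theorem via Shimura curves, CM theory, and Gross--Zagier/Kolyvagin/Zhang; the transcendental (non-algebraic) nature of $\mathrm{AJ}^\beta$ for $r>1$ as the essential obstruction; the potential bootstrap via the ATC-to-ATR dimension drop of \S\ref{secATC}; the comparison with Gartner's points; and numerical verification as the only current source of direct evidence --- matches the paper's own perspective on the status of Conjecture \ref{ATR-gen}. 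There is therefore no proof in the paper to compare against, and your response is an accurate reflection of the state of the art.
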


Here $\mathrm{rec}: \A_{K}^\times \lra \Gal(K^{\mathrm{ab}}/K)$ is Artin's reciprocity map, normalized so that
uniformizers  correspond to geometric Frobenius elements. Note that the three statements of Conjecture \ref{ATR-gen} are
the translation to the current context of (SH1), (SH2), (SH3) given in the introduction.

%
%
%
%

%

\subsection{Darmon-Logan's algorithm for the computation of ATR points}\label{subsection: Darmon's algorithm for ATR points}

One naturally wonders whether Darmon points, as introduced in Definition \ref{Darmon-point}, can be computed effectively
in explicit examples.  A positive answer would allow us to test Conjecture \ref{ATR-gen} numerically, leading to an
explicit construction of rational points on elliptic curves over number fields which were not accessible before.

However, the image of $T_\varphi$ under the Abel-Jacobi map $\mathrm{AJ}^\beta$ of \eqref{AJtop} can only be computed
provided we are able to write down  an explicit candidate for a region $\tilde{T}_\varphi$ having $T_\varphi$ as
boundary and we can integrate it against the differential form $\omega_{f_0}^\beta$. The latter only seems possible when
there is available a natural, explicit description of $\omega_{f_0}^\beta$. And this is precisely the case when the
following Gross-Zagier assumption holds:

\begin{ass}\label{GZass} $r=d$ and all the primes dividing $N$ are split in $K/F_0$.\end{ass}

Indeed, when this is the case we have that $K/F_0$ is an ATR extension, $B\simeq \M_2(F_0)$ and $X$ is a $d$-dimensional
 Hilbert modular variety over $F_0^\star=\Q$. In addition, and most importantly, the form $\omega_{f_0}^\beta$ admits a
natural fourier expansion around the cusp at infinity, and there exist algorithms which allow to compute it up to a
given precision: cf.\,e.g.\,\cite{DeVo}.

If this hypothesis does not hold true, we are at a loss to compute numerical approximations to the points
$P_\varphi^\beta$. We impose Assumption \ref{GZass} for the remainder of this section, that we devote to sketch
Darmon-Logan's algorithm for computing an explicit chain $\tilde{T}_\varphi$ whose boundary is $T_\varphi$. We adapt it
to our slightly more general setting in which $[\Q_{f_0}:\Q]\geq 1$, so that we can also make use of it later. To
simplify the exposition, and since this is the case
encountered in the numerical example described in \S \ref{subsection: numerical examples}, let us assume also that $[F_0\colon \Q]=2$.

 The key point in Darmon--Logan's approach is the definition of certain
$3$-limit integrals
of $\omega_{f_0}^\beta$, allowed by the following interpretation of
the homology groups of $X$.   Let $\Gamma$ denote the quotient of $\Gamma_\psi$ by the normal closure of the subgroup
generated by the elliptic and parabolic elements.  Let $I_{\Gamma}$ be the augmentation ideal, which sits in the exact
sequence
\begin{equation*}
 0\lra I_{\Gamma}\lra \Z[{\Gamma}] \lra \Z \lra 0.
\end{equation*}
For a $\Gamma$-module $M$ we denote by $M_\Gamma=M/I_\Gamma M$ its ring of $\Gamma$-coinvariants.  Tensoring the above
sequence  by $I_\Gamma$ and taking the group homology exact sequence we obtain
\begin{equation}\label{eq: tensored exact sequence}
 0\lra H_1(\Gamma, I_\Gamma)\lra (I_\Gamma\otimes_\Z I_\Gamma)_\Gamma \stackrel{\partial}{ \lra} (Z[\Gamma]\otimes_\Z
I_\Gamma)_\Gamma \lra (I_\Gamma)_\Gamma\lra 0,
\end{equation}
where $\partial$ is the natural map induced by the inclusion $I_\Gamma\subset Z[\Gamma]$. There are canonical
isomorphisms $(I_\Gamma)_\Gamma\simeq H_1(\Gamma,\Z)$ and $H_1(\Gamma,I_\Gamma)\simeq H_2(\Gamma,\Z)$. Therefore, in
view of the natural isomorphisms $H_1(\Gamma,\Z)\simeq H_1(X,\Z)$ and $H_2(\Gamma,\Z)\simeq H_2(X,\Z)$ one can identify
\eqref{eq: tensored exact sequence} with the exact sequence
\begin{equation}\label{eq: topological tensored exact sequence}
 0\lra Z_2(X, \Z)\lra C_2(X,\Z) \stackrel{\partial}{ \lra} Z_1(X,\Z) \lra H_1(X,\Z)\lra 0,
\end{equation}
where $\delta$ is the topological boundary map.

Recall that   integrals of $\omega_{f_0}^\beta$ satisfy the following invariance property:
\[
 \int_x^y\int_z^t \omega_{f_0}^\beta=\int_{\gamma x}^{\gamma y}\int_{\gamma z}^{\gamma t}\omega_{f_0}^\beta, \ \ \
\text{for all} \ \gamma\in \Gamma \text{ and } x,y,z,t\in\mathcal H.
\]
We remark that in this expression $\gamma$ is acting on the outer limits (resp. inner limits) of the integral through
$v_1$ (resp. $v_2$). By choosing
base points $z_1\in\mathcal H_1$ and $z_2\in \mathcal H_2$ one obtains then a group homomorphism
\[
 \begin{array}{cccc}
  I_{z_1,z_2}^\beta\colon &(I_\Gamma\otimes_\Z I_\Gamma)_\Gamma & \lra & \C\\
 & (\gamma_1-1)\otimes (\gamma_2-1) & \longmapsto & \int_{z_1}^{\gamma_1 z_1}\int_{z_2}^{\gamma_2
z_2}\omega_{f_0}^\beta,
 \end{array}
\]
which can be identified with the map
\[
 \begin{array}{ccc}
  C_2(X,\Z) & \lra & \C\\
  T & \longmapsto & \int_T\omega_{f_0}^\beta.
 \end{array}
\]
Observe that the identification $H_1(\Gamma,I_\Gamma)\simeq Z_2(X,\Z)$ yields then an explicit description
of the lattice $\Lambda_{f_0}^\beta$;
indeed $\Lambda_{f_0}^\beta\simeq I^\beta_{z_1,z_2}(H_1(\Gamma,I_\Gamma))$.

Suppose now that $1\otimes (\gamma_2-1)\in \Z[\Gamma]\otimes I_\Gamma$ is such that $e (1\otimes (\gamma_2-1))$ lies in
the image of $\delta$ for some integer $e$. That would correspond in \eqref{eq: topological tensored exact sequence} to
a cycle $T$ such that $eT$ is null homologous. Following \cite{DL} one defines
\begin{equation}\label{eq: 3-limit integrals}
 \int^{z_1}\int_{z_2}^{\gamma_2 z_2}\omega_{f_0}^\beta:=\frac{1}{e} I_{z_1,z_2}^\beta
(\partial^{-1}(e\cdot (1\otimes (\gamma_2-1))))\in \C/\Lambda_{f_0}^\beta.
\end{equation}
This is indeed a well-defined quantity in $\C/\Lambda_{f_0}^\beta$, because any two preimages of $e
(1\otimes (\gamma_1-1))$ by $\delta$ differ by an element of $Z_2(X,\Z)$. It is sometimes convenient to use expressions
such as $\int^x\int_y^z\omega_{f_0}^\beta$, but we warn the reader that they only make sense if $z=\gamma y$ for
some $\gamma\in \Gamma$ and $e\cdot (1\otimes \gamma)\in \im(\delta)$ for some $e$. It follows from the definitions that
the $3$-limit integrals of \eqref{eq: 3-limit integrals} enjoy the following properties:
\begin{eqnarray}
\int^{x}\int_y^{z} &\omega_{f_0}^\beta &=\int^{\gamma x}\int_{\gamma y}^{\gamma z}
\omega_{f_0}^\beta\ \   \text{ for all } \gamma\in \Gamma,\label{eq: property 1 of the integral}\\
\int^{x}\int_y^{z} &\omega_{f_0}^\beta &=\int^{x}\int_y^{t}
\omega_{f_0}^\beta+\int^{x}\int_t^{z}
\omega_{f_0}^\beta,\label{eq: property 2 of the integral}\\
\int^{y}\int_t^{z} &\omega_{f_0}^\beta &-\int^{x}\int_t^{z}
\omega_{f_0}^\beta=\int_x^{y}\int_t^{z}
\omega_{f_0}^\beta.\label{eq: property 3 of the integral}
\end{eqnarray}

Now let $K/F_0$ be a quadratic ATR extension  and let $\varphi\colon R_c\hookrightarrow \cO$ be a normalized optimal
embedding of conductor $c$. Denote by $z_1$ the unique fixed point  of $K$  acting on $\mathcal H_1$ through $v_1$.
The stabilizer $\Gamma_{\varphi}$ of $z_1$ in $\Gamma$ is an abelian group or rank $1$ (cf. \cite[Proposition
1.4]{DL}). Call $\gamma_\varphi$ one of its generators. Let $z_2,z_2'\in \partial \mathcal H_2$ denote the two fixed
points of $K$ acting through $v_2$. Then we have that
\[
 \int_{\tilde{T}_\varphi}\omega_{f_0}^\beta=\int^{z_1}\int_{z_2}^{z_2'}\omega_{f_0}^\beta=\int^{z_1}\int_{
z_2}^{\gamma_{\varphi} z_2 } \omega_ { f_0 } ^\beta.
\]
Using properties \eqref{eq: property 1 of the integral}, \eqref{eq: property 2 of the integral} and \eqref{eq: property
3 of the integral}  it is easy to check that the last integral does not depend on $z_2$. Therefore,
we see that
\begin{equation}\label{eq: definitioin of the point using 3-limit integrals}
 \int_{\tilde{T}_\varphi}\omega_{f_0}^\beta=\int^{z_1}\int_{x}^{\gamma_{\varphi}x}\omega_{f_0}^\beta
\end{equation}
for any $x\in \mathcal H_2\cup \P^1(F_0)$.
If $N=1$ an algorithm for computing $3$-limit integrals as the one in \eqref{eq: definitioin of the point using 3-limit
integrals} is given in   \cite[\S 4]{DL}, by means of the  \emph{continued fractions trick}. To the best of our
knowledge, for arbitrary level $N$ at the moment no
generalization of this algorithm is known (cf. also \cite[Annexe A2]{Ga}).

\section{Almost totally complex points}\label{secATC}
This section  is devoted to the main construction of the article. It is an explicitly computable construction of points
on certain elliptic $F_0$-curves. By granting conjectures of \S \ref{section: Quadratic points on modular abelian
vareties}  over ATR extensions, these points are shown to be rational over ATC fields.
Recall that for a field extension  $F/F_0$, an elliptic curve $E/F$ is said to be an
\emph{elliptic $F_0$-curve} if it is isogenous over $F$ to all of its $\Gal(F/F_0)$-conjugates.

The construction of ATC points on $F_0$-curves is given in \ref{subsection: construction of the points}. In
\ref{ComparingGartner} we compare our ATC points  with
Gartner's Darmon points,
and conjecture a precise relation between them. Finally, in
\ref{subsection: numerical examples} we carry out an explicit calculation of such an ATC point for a particular elliptic
curve. At the same time of giving the details of how explicit computations can be handled, we numerically verify that
the obtained point satisfies the conjectures of \S \ref{section: Quadratic points on modular abelian vareties},
which provides certain evidence for their validity.

\subsection{Construction of ATC points}\label{subsection: construction of the points}
Let $F_0$ be a totally real number field of narrow class number $1$ and degree $r$. We denote by $v_1,\dots, v_r$
the embeddings of $F_0$ into $\C$, and we regard $F_0$ as a subfield of $\C$ via $v_1$. We will also regard all
extensions of $F_0$ as embedded in $\C$ via a fixed extension of $v_1$ to $\overline{F}_0$, which we denote by $v_1$ as
well. Let $F=F_0(\sqrt{N_0})$ be a totally real
quadratic extension and let $E/F$ be an elliptic $F_0$-curve without complex multiplication.

We denote by
$A=\Res_{F/F_0}E$  the
variety over $F_0$ obtained by restriction of scalars. If $E$ is not isogenous to the base change of an
elliptic curve defined over $F_0$, then $A/F_0$ is simple and $\Q\otimes\End_{F_0}(A)$ is isomorphic to a
quadratic field. From now on we restrict to the following setting.
\begin{ass}
$\Q\otimes\End_{F_0}(A)$ is a quadratic imaginary field.
\end{ass}
 We shall also make
the following assumption, which is a consequence of the generalized Shimura--Taniyama
Conjecture for abelian varieties of $\GL_2$-type.

\begin{ass}
There exists a normalized Hilbert modular form $f_0$  over $F_0$ of parallel weight $2$ such that $A$ is isogenous to
$A_{f_0}$ (where we recall that $A_{f_0}$ is the modular abelian variety attached to $f_0$ by means of the generalized
Eichler--Shimura construction, cf. Assumption \ref{ass1}).
\end{ass}
Therefore we can suppose that  $A=A_{f_0}$. Observe that, since $E$ is an $F_0$-curve, we have that $A\sim_F E^2$.
Denote by $N$ and $\psi$ the level and the nebentypus of $f_0$ respectively and, for an ideal $\mathfrak m$ of
$F_0$, denote by $a_\mathfrak{m}$ the Fourier coefficient of $f_0$ corresponding to $\mathfrak m$.

\begin{lemma}\label{lema: psi is quadratic}
 The character $\psi$ is quadratic and $F$ is the field corresponding by class field theory to the kernel of $\psi$.
\end{lemma}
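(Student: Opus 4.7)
My plan is to compare the $\ell$-adic Galois representations attached to the two incarnations of $A=\Res_{F/F_0}E$: as the restriction of scalars of $E$ and as (isogenous to) the modular abelian variety $A_{f_0}$. Fix a suitable prime $\ell$ and let $\rho_{f_0}\colon G_{F_0}\to\GL_2(\overline{\Q}_\ell)$ denote the Galois representation attached to $f_0$ by Eichler--Shimura--Carayol--Taylor, which has determinant $\chi_\ell\cdot\widetilde\psi$, where $\chi_\ell$ is the cyclotomic character and $\widetilde\psi$ is the Galois avatar of $\psi$. Since $\Q_{f_0}$ is imaginary quadratic, the semisimplification of $V_\ell(A_{f_0})\otimes\overline{\Q}_\ell$ splits as $\rho_{f_0}\oplus\rho_{f_0}^\sigma$, with $\sigma$ the nontrivial element of $\Gal(\Q_{f_0}/\Q)$.

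On the other side, $V_\ell(A)\simeq\Ind_{G_F}^{G_{F_0}}V_\ell(E)$. Applying Mackey's formula together with the $F_0$-curve hypothesis $V_\ell(E^\tau)\simeq V_\ell(E)$ as $G_F$-modules (with $\tau$ generating $\Gal(F/F_0)$) yields $V_\ell(A)|_{G_F}\simeq V_\ell(E)^{\oplus 2}$. Because $E$ is non-CM, $V_\ell(E)\otimes\overline{\Q}_\ell$ is an irreducible $G_F$-module, and likewise $\rho_{f_0}|_{G_F}$ is irreducible (justified below). Matching irreducible constituents yields $\rho_{f_0}|_{G_F}\simeq V_\ell(E)\otimes\overline{\Q}_\ell$; taking determinants gives $\widetilde\psi|_{G_F}=1$, so by class field theory $\psi$ factors through $\Gal(F/F_0)\simeq\Z/2\Z$. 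Hence $\psi\in\{1,\varepsilon_{F/F_0}\}$, where $\varepsilon_{F/F_0}$ is the quadratic character cutting out $F$.

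To rule out $\psi=1$, I would invoke the standard identity $\overline{a_\mathfrak{p}}=\overline{\psi(\mathfrak{p})}\,a_\mathfrak{p}$ valid for newforms at good primes: a trivial $\psi$ would force all $a_\mathfrak{p}$ to be real and $\Q_{f_0}\subset\R$, contradicting that $\Q_{f_0}$ is imaginary quadratic. Consequently $\psi=\varepsilon_{F/F_0}$, which is quadratic and whose kernel corresponds under class field theory to the extension $F/F_0$, as claimed.

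The step requiring the most care is the irreducibility of $\rho_{f_0}|_{G_F}$: were it reducible, $\rho_{f_0}$ would be induced from a character of an index-two subgroup of $G_{F_0}$, making $f_0$ a CM theta series and endowing $A_{f_0}$ with complex multiplication over a quadratic extension $M/F_0$. I would argue this contradicts $\Q\otimes\End_F(A)\supseteq M_2(\Q)$ arising from $A\times_{F_0}F\sim E^2$ with $E$ non-CM, since $M_2(\Q)$ contains no CM subfield; nonetheless the argument must be spelled out carefully since the putative CM extension $M$ need not a priori coincide with $F$.
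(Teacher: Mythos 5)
Your argument is correct but takes a genuinely different route from the paper's. The paper identifies $H:=\Gal(\overline\Q/F)$ with $H_\psi:=\ker\psi$ by comparing endomorphism algebras: Shimura's relation $a_\p=\overline a_\p\,\psi(\p)$ exhibits $H_\psi$ as the largest subgroup over which the two pieces $V_\lambda,V_{\lambda'}$ of $V_\ell(A)$ become isomorphic, while Faltings's theorem together with $A\sim_F E^2$ gives $\End_{\Q_\ell[H]}V_\ell\simeq \M_2(\Q_\ell)$, hence $V_\lambda|_H\simeq V_{\lambda'}|_H$ and $H\subseteq H_\psi$; the inclusion is promoted to an equality because $\Q\otimes\End_{F_0}(A)$ is a field rather than $\M_2(\Q)$. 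You instead restrict $V_\ell(A)\simeq\Ind_{G_F}^{G_{F_0}}V_\ell(E)$ to $G_F$, match irreducible constituents against $\rho_{f_0}\oplus\rho_{f_0}^\sigma$, and read off $\psi|_{G_F}=1$ from the determinant $\det\rho_{f_0}=\chi_\ell\,\psi$; non-triviality of $\psi$ is then settled separately by the same Shimura relation, since $\psi=1$ would force all $a_\p$ to be real and $\Q_{f_0}\subset\R$, contradicting that $\Q_{f_0}$ is imaginary quadratic. Both arguments rely on the non-CM hypothesis (for irreducibility) and on the imaginary-quadratic hypothesis (to exclude $\psi=1$); yours replaces the Tate-conjecture comparison of endomorphism algebras with a softer determinant computation, which is a clean trade. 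One economy you can make: the irreducibility of $\rho_{f_0}|_{G_F}$ that you flag as the delicate point is in fact automatic. Once you have $\rho_{f_0}|_{G_F}\oplus\rho_{f_0}^\sigma|_{G_F}\simeq\bigl(V_\ell(E)\otimes\overline\Q_\ell\bigr)^{\oplus 2}$ with $V_\ell(E)\otimes\overline\Q_\ell$ irreducible of dimension two, the left-hand side is semisimple (a submodule of a semisimple module) and its only constituents are two-dimensional, so each two-dimensional direct summand must be isomorphic to $V_\ell(E)\otimes\overline\Q_\ell$; the CM theta-series discussion in your final paragraph can therefore be dropped.
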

\begin{proof}
  Denote by $F_\psi$ the field cut by the kernel of $\psi$. Let $G=\Gal(\Qb/F_0)$, $H=\Gal(\Qb/F)$ and
$H_\psi=\Gal(\Qb/F_\psi)$.  It is enough to show that $H=H_\psi$ (the fact that $\psi$ is quadratic follows from this
because $[F\colon F_0]=2$). Let $\ell$ be a
prime number that splits in $\Q_{f_0}$, say as $\ell=\lambda\lambda'$, and denote by $V_\ell=T_\ell(A)\otimes_{\Z_\ell}
\Q_\ell $ the $\ell$-adic Tate module of $A$. There is an isomorphism of $\Q_\ell[G]$-modules $V_\ell=V_\lambda\times
V_{\lambda'}$, where $V_\lambda=E_\lambda\otimes_{E\otimes \Q_\ell} V_\ell$ and
$V_{\lambda'}=E_{\lambda'}\otimes_{E\otimes \Q_\ell} V_\ell$.

Denote by $\rho_\lambda$
and $\rho_\lambda'$ the  representations of $G$ afforded by $V_\lambda$ and $V_{\lambda'}$ respectively, which are
irreducible because $E$ is not CM.
Since $A$ is
the variety attached to $f_0$ by the Eichler--Shimura construction, and relabeling $\lambda$ and
$\lambda'$ if necessary, we can suppose that:
\begin{equation}
 \Tr(\rho_\lambda(\Frob_{\mathfrak p}))=a_{\mathfrak p}\ \  \mbox{ and } \ \ \ \Tr(\rho_{\lambda'}(\Frob_{\mathfrak
p}))=\overline{a}_{\mathfrak p},\ \ \mbox{
for all primes ${\mathfrak p}\nmid N$},
\end{equation}
where the bar denotes complex conjugation. By \cite[Theorem 2.5]{Sh} the nebentypus $\psi$ is characterized by the fact
that
$a_{{\mathfrak p}}=\overline{a}_{\mathfrak p}\psi({\mathfrak p})$ for primes ${\mathfrak p}\nmid N$. Therefore
$V_\lambda$ and $V_{\lambda'}$ are isomorphic as
$\Q_\ell[H_\psi]$ representations, so that $\End_{\Q_\ell[H_\psi]}V_\ell\simeq \M_2(\Q_\ell)$. Moreover, $H_\psi$ is the
largest subgroup of $G$ for which this is true. On the other hand, we have that $\End_F(A)\otimes\Q\simeq
\End_F(E^2)\otimes\Q\simeq \M_2(\Q) $. By the case of Tate's Conjecture proven by Faltings
this implies that $\End_{\Q_\ell[H]}V_\ell \simeq \End_{F}^0(A)\otimes_\Q\Q_\ell\simeq \M_2(\Q_\ell)$, from which we
deduce that
necessarily
$H=H_\psi$.
\end{proof}
Observe that, as a consequence of the conductor-discriminant formula, $F$ has discriminant $N$ over $F_0$. For
simplicity we
assume from now on that $N$ is not divisible by any dyadic prime, and thus  squarefree.

Let $M=F(\sqrt\alpha)$ be  a quadratic ATC extension of $F$. Recall that ATC
stands for \emph{almost totally complex},
and it means in this case that $M$ has exactly two real places. We suppose that $M$ is real under the place $v_1$. We
aim to give an explicitly computable construction of
points in
$E(M)$, by making use of the conjectural constructions of Section \ref{section: Quadratic points on modular abelian
vareties}.

 Write $\Gal(F/F_0)=\{1,\tau\}$ and let $M'=F(\sqrt{\alpha^\tau})$.
Clearly $M$ is not Galois over $F_0$, and its   Galois closure $\cM$  is the composition of $M$ and
$M'$. It is easily seen that  $\Gal(\cM/F_0)\simeq D_{2\cdot 4}$, the dihedral group of order 8. The field
$K=F_0(\sqrt{\alpha\alpha^\tau})$ is
contained in $\cM$, and there exist fields $L$ and $L'$ such that the diagram of subfields of $\cM/F_0$ is given by:
\begin{equation}\label{diagram of subfields}
\xymatrix{
         &           &  \cM \ar@{-}[d]\ar@{-}[dll]\ar@{-}[drr] \ar@{-}[dl]\ar@{-}[dr]     &         &           \\
M   & M'   &      FK      &    L    &    L'     \\
  &  F\ar@{-}[ul]\ar@{-}[u]\ar@{-}[ur] & F_0(\sqrt{N_0\alpha\alpha^\tau}) \ar@{-}[u] & K
\ar@{-}[ul]\ar@{-}[u]\ar@{-}[ur]\\
    & &F_0 \ar@{-}[ul]\ar@{-}[u]\ar@{-}[ur] & &
}
\end{equation}
Our construction relies on the fact that $K$ is ATR. Indeed, we have the following lemma.
\begin{lemma}\label{lemma: K ATR and L TC}
 The field $K$ is ATR and it is complex under $v_1$. The fields $L$ and $L'$ are totally imaginary.
\end{lemma}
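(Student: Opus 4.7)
The plan is a direct archimedean sign analysis. First I would translate the hypothesis that $M=F(\sqrt\alpha)$ is ATC with real places above $v_1$. For each $i=1,\dots,r$ let $w_i^+, w_i^-$ denote the two real extensions of $v_i$ to $F$, swapped by $\tau$. A real place $w$ of $F$ extends to real (resp.\ complex) places of $M$ precisely when $w(\alpha)>0$ (resp.\ $<0$), so after relabeling $w_1^\pm$ we must have $w_1^+(\alpha)>0$, $w_1^-(\alpha)<0$, and $w_i^+(\alpha),w_i^-(\alpha)<0$ for every $i\geq 2$. I would also record at the outset that $\alpha\notin F_0$: otherwise $M/F_0$ would be abelian (either biquadratic over $F_0$ or equal to $F$), contradicting the $D_{2\cdot 4}$ structure of $\Gal(\cM/F_0)$ displayed in \eqref{diagram of subfields}. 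As a consequence, $w(\alpha-\alpha^\tau)\neq 0$ for every real embedding $w$ of $F$.

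For the assertions about $K$, I would exploit that $\alpha\alpha^\tau\in F_0$ and that, for either real extension $w$ of $v_i$ to $F$,
\[
v_i(\alpha\alpha^\tau)=w(\alpha)\cdot w(\alpha^\tau)=w_i^+(\alpha)\cdot w_i^-(\alpha).
\]
The sign table above then forces $v_1(\alpha\alpha^\tau)<0$ and $v_i(\alpha\alpha^\tau)>0$ for $i\geq 2$, so $K=F_0(\sqrt{\alpha\alpha^\tau})$ is complex under $v_1$ and real under each $v_i$ with $i\geq 2$. This is exactly the ATR condition for $K/F_0$ with $v_1$ as distinguished complex place.

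For $L$ and $L'$ I would first identify them concretely. Setting $s=\alpha+\alpha^\tau$ and $p=\alpha\alpha^\tau$ in $F_0$, the identity $(\sqrt\alpha\pm\sqrt{\alpha^\tau})^2=s\pm 2\sqrt p\in K$ shows that the three quadratic extensions of $K$ inside $\cM$ are $FK=K(\sqrt{N_0})$, $L=K(\sqrt{s+2\sqrt p})$ and $L'=K(\sqrt{s-2\sqrt p})$. The real embeddings of $K$ are precisely the pairs $\tilde v_i^\pm$ above each $v_i$ for $i\geq 2$, indexed by the sign assigned to $\sqrt p$. Writing $a=-w_i^+(\alpha)>0$ and $b=-w_i^-(\alpha)>0$, one computes $v_i(s)=-(a+b)$ and $\sqrt{v_i(p)}=\sqrt{ab}$, so each of the four values $\tilde v_i^{\pm}(s\pm 2\sqrt p)$ equals either $-(\sqrt a-\sqrt b)^2$ or $-(\sqrt a+\sqrt b)^2$, hence is strictly negative. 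Therefore no real embedding of $K$ extends to a real embedding of $L$ or $L'$, and both fields are totally imaginary. The one mild obstacle is the strictness of the inequality $-(\sqrt a-\sqrt b)^2<0$, which fails only when $a=b$, i.e.\ $\alpha=\alpha^\tau\in F_0$; and this has been ruled out in the first paragraph.
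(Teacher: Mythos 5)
Your proof is correct and follows essentially the same route as the paper's: identify $L$ with $K(\sqrt{\alpha+\alpha^\tau+2\sqrt{\alpha\alpha^\tau}})$ and carry out an archimedean sign analysis driven by the ATC hypothesis on $M$. You are in fact somewhat more careful than the paper's one-sentence argument, since you explicitly verify the strict negativity of $v_i(s\pm 2\sqrt p)$ at every real place of $K$ by reducing it to $\alpha\neq\alpha^\tau$, a non-degeneracy the paper leaves implicit.
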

\begin{proof}
 The first assertion  follows immediately from the definitions. The property about $L$ comes from the fact that it
can be identified with $K(\sqrt{\alpha}+\sqrt{\alpha^\tau})=K(\sqrt{\alpha+\alpha^\tau+2\sqrt{\alpha\alpha^\tau}})$,
and similarly for $L'$. Since $M$ is ATC, under a complex embedding of $L$ the image of either $\sqrt{\alpha}$ or
$\sqrt{\alpha^\tau}$ does not lie in $\R$.
\end{proof}

Since $K$ is an ATR extension which is complex under $v_1$ we are in the setting of
\S \ref{subsection: Darmon's algorithm for ATR points}.  Let $c\subset R_0$ be an integral ideal and let $R_c$ be
the order of conductor $c$ in $R_K$. Let $\cO$ be the
Eichler order of level $N$ in $M_2(R_0)$ consisting on matrices which are upper triangular modulo $N$, and let
$\varphi\colon R_c\hookrightarrow \cO$ be an optimal embedding. Observe that the
points $P_\varphi^\beta$ constructed in Section \ref{section: Quadratic
points on modular abelian vareties} are explicitly computable in this case, because Assumption \ref{GZass} holds true. Moreover, granting Conjecture \ref{ATR-gen}, they
belong to $A(H_\varphi^\beta)$. The key point  is that, as we shall see in Proposition
\ref{proposition: main
results about the fields}, for suitable choices of $c$ and $\beta$ the field $M$ is contained in $H_\varphi^\beta$.
Therefore,   points in $E(M)$ can be constructed by projecting $P^\beta_\varphi$ via the isogeny $A\sim_F
E^2$, and then taking trace over $M$.

Before stating and proving Proposition \ref{proposition: main results about the fields} we need some preliminary
results. Let $\chi_M,\chi_{M'}\colon \A_F^\times\ra \{\pm 1\}$ and $\chi_L,\chi_{L'}\colon \A_K^\times\ra \{\pm 1\}$
denote the
quadratic Hecke characters corresponding to the fields $M$, $M'$, $L$ and $L'$.  Similarly, let
$\varepsilon_F,\varepsilon_K\colon \A_{F_0}^\times \ra \{\pm 1\}$ be the ones corresponding to $F$ and $K$. Recall that
$\varepsilon_F=\psi$ by Lemma \ref{lema: psi is quadratic}.

\begin{lemma}\label{lemma: central character}
 \begin{enumerate}
\item $\chi_L\chi_{L'}=\psi\circ \mathrm{Nm}_{K/F_0}$.
  \item The central character of $\chi_L$ is $\psi$.
 \item We have that $\Ind_{F}^{F_0}\chi_M\simeq \Ind_{F}^{F_0}\chi_{M'} \simeq \Ind_{K}^{F_0}\chi_L\simeq
 \Ind_{F}^{F_0}\chi_{L'}$ are isomorphic as representations of $\Gal(\cM/F_0)$.
 \end{enumerate}
\begin{proof}
Assertion  (1) follows from the fact that $\chi_L\chi_{L'}$ is the quadratic character associated with the extension
$FK/K$, which is $\psi\circ \mathrm{Nm}_{K/F_0}$. If we let $\sigma$ denote the generator of $\Gal(K/F_0)$, then
we have that $\chi_L(x^\sigma)=\chi_{L'}({x})$. Then from (1) we see that $\chi_L$ restricted to
$\mathrm{Nm}_{K/F_0}\A_K^\times$ is equal to $\psi$. Then by class field theory the central character of $\chi_L$
is either $\psi$ or $\psi \varepsilon_K$. But it cannot be $\psi\varepsilon_K$: let $u=(-1,1,\cdots,1)\in
\A_{F_0,\infty}^\times$ (where the first position corresponds to the place $v_1$). Then $\psi\varepsilon_K(u)=-1$, but
$\chi_L(u)=1$ because $v_1$ extends to a complex embedding of $K$.
 Finally, (3) follows from the fact that the group
 $D_{2\cdot 4}$ has a unique $2$-dimensional irreducible representation.
\end{proof}

\end{lemma}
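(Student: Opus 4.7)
I will translate each assertion into a statement about Galois characters for the $D_{2\cdot 4}$-extension $\cM/F_0$ depicted in \eqref{diagram of subfields}, and then exploit the group structure combined with class field theory.

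For part (1), the subgroup $\Gal(\cM/K)$ is a Klein four group, and its three index-$2$ subgroups correspond to the three intermediate quadratic extensions $L$, $L'$ and $FK$ of $\cM/K$. Since the three nontrivial characters of a Klein four group multiply to the trivial one, the product $\chi_L\cdot \chi_{L'}$ must equal the character of $\A_K^\times$ cutting out $FK/K$. Because $F/F_0$ corresponds to $\psi$ by Lemma \ref{lema: psi is quadratic}, the base change $FK/K$ corresponds to $\psi\circ \Norm_{K/F_0}$, which is the content of (1).

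For part (2), let $\sigma$ generate $\Gal(K/F_0)$. Any lift of $\sigma$ to $\Gal(\cM/F_0)$ must swap $L$ and $L'$: neither $L$ nor $L'$ can be Galois over $F_0$ (otherwise $\Gal(\cM/L)$ would be the unique normal order-$2$ subgroup of $D_{2\cdot 4}$, forcing $L=L'$), so they form a single Galois orbit of size $2$ realized by the coset of $\sigma$. Hence $\chi_L^\sigma = \chi_{L'}$, and for $y = \Norm_{K/F_0}(x) = x\cdot x^\sigma$, part (1) yields
\[
\chi_L(y) = \chi_L(x)\chi_L(x^\sigma) = \chi_L(x)\chi_{L'}(x) = (\psi\circ \Norm_{K/F_0})(x) = \psi(y).
\]
Thus $\chi_L$ and $\psi$ agree on $\Norm_{K/F_0}(\A_K^\times)$, a subgroup of index $2$ in $\A_{F_0}^\times$ whose complementary coset is detected by $\varepsilon_K$, so $\chi_L|_{\A_{F_0}^\times}$ is either $\psi$ or $\psi\varepsilon_K$. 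To decide, I would test on the archimedean idele $u\in\A_{F_0,\infty}^\times$ whose $v_1$-component is $-1$ and whose other components are $1$: one has $\psi(u)=1$ (since $F/F_0$ is totally real, $\psi_{v_1}$ is trivial), $\varepsilon_K(u)=-1$ (since $v_1$ extends to a complex place of $K$ by Lemma \ref{lemma: K ATR and L TC}), and $\chi_L(u)=1$ (the image of $u$ in $\A_K^\times$ is supported at the unique complex place of $K$ above $v_1$, and $\C^\times$ carries no nontrivial continuous quadratic character). This forces the central character to be $\psi$.

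For part (3), reading the final term as $\Ind_K^{F_0}\chi_{L'}$, the characters $\chi_M, \chi_{M'}$ factor through $\Gal(\cM/F)$ and $\chi_L, \chi_{L'}$ through $\Gal(\cM/K)$, both of which are index-$2$ subgroups of $\Gal(\cM/F_0)\simeq D_{2\cdot 4}$. Each induction produces a $2$-dimensional representation, and Mackey's irreducibility criterion applies: for instance $\Ind_F^{F_0}\chi_M$ is irreducible because $\chi_M^\tau = \chi_{M'}\ne \chi_M$ for any $\tau\notin \Gal(\cM/F)$, since $M^\tau = M'\ne M$; the analogous verifications handle $\chi_{M'}$, $\chi_L$ and $\chi_{L'}$. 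Since $D_{2\cdot 4}$ has a unique irreducible $2$-dimensional representation, all four induced representations must coincide with it.

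The one step requiring genuine care is the local archimedean computation that distinguishes $\psi$ from $\psi\varepsilon_K$ in part (2); the rest is a formal bookkeeping exercise using the subfield structure encoded in \eqref{diagram of subfields} together with Lemma \ref{lema: psi is quadratic} and Lemma \ref{lemma: K ATR and L TC}.
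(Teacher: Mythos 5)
Your argument is correct and follows the paper's own line of reasoning in all three parts: (1) identifying $\chi_L\chi_{L'}$ with the character of $FK/K$, (2) comparing $\chi_L|_{\A_{F_0}^\times}$ with $\psi$ on norms and then distinguishing $\psi$ from $\psi\varepsilon_K$ by evaluating at an archimedean id\`ele supported at $v_1$, and (3) using that $D_{2\cdot 4}$ has a single $2$-dimensional irreducible representation. You add genuinely useful detail the paper leaves implicit — the Klein-four character argument in (1), the reason $\sigma$ must interchange $L$ and $L'$, and the Mackey irreducibility check in (3) — and you correctly note that the last induction in the statement should be read as $\Ind_K^{F_0}\chi_{L'}$ rather than $\Ind_F^{F_0}\chi_{L'}$, since $\chi_{L'}$ is a character of $\A_K^\times$.
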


\begin{proposition}\label{proposition: the discriminant of L/K}
 Let $\mathfrak d_{L/K}$ denote the discriminant of $L/K$. Then $\mathfrak
d_{L/K}=c\cdot\mathfrak N$, where $c$ is an ideal of $F_0$ and
$\mathrm{Nm}_{K/F_0}\mathfrak N=N$.
\end{proposition}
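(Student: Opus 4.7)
The plan is to apply the conductor--discriminant formula to the quadratic extension $L/K$, which identifies $\mathfrak d_{L/K}$ with $\mathfrak f(\chi_L)$, and then to compute the $\mathfrak p$-part of this conductor prime by prime in $F_0$. The analysis will rest on three inputs: (a) the central-character identity $\chi_L|_{\A_{F_0}^\times}=\psi$ of Lemma \ref{lemma: central character}(2); (b) the product formula $\chi_L\chi_{L'}=\psi\circ \mathrm{Nm}_{K/F_0}$ of Lemma \ref{lemma: central character}(1); and (c) the Galois transport $\chi_L^\sigma=\chi_{L'}$, where $\sigma$ generates $\Gal(K/F_0)$. Input (c) comes for free from the observation that $L$ and $L'$ are the two non-Galois quadratic subfields of $\cM$ containing $K$: the third such subfield, $FK$, is the unique Galois one over $F_0$ (it corresponds to the center of $\Gal(\cM/F_0)\simeq D_{2\cdot 4}$), so any lift of $\sigma$ to $\Gal(\cM/F_0)$ must interchange $L$ and $L'$. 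A standing ingredient is the hypothesis that no dyadic prime divides $N$, which will bound the conductor exponent of any ramified quadratic character of $K_\mathfrak P^\times$ by one.

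For $\mathfrak p\nmid N$, I plan to show the $\mathfrak p$-part of $\mathfrak f(\chi_L)$ is extended from $R_0$. The inert case is automatic. In the split case $\mathfrak p R_K=\mathfrak P\bar{\mathfrak P}$, fact (c) forces the $\mathfrak P$- and $\bar{\mathfrak P}$-exponents to be swapped, while (b) locally at $\mathfrak P$, where $\psi\circ \mathrm{Nm}_{K/F_0}$ is unramified, shows $\chi_{L,\mathfrak P}$ and $\chi_{L',\mathfrak P}$ differ by an unramified character and hence share the same exponent, so the $\mathfrak p$-part is $(\mathfrak p R_K)^a$. In the ramified case $\mathfrak p R_K=\mathfrak P^2$, a hypothetical ramification of $\chi_{L,\mathfrak P}$ would factor through the nontrivial quadratic character of $\mathbb F_\mathfrak p^\times$; pulling back to $R_{0,\mathfrak p}^\times$ via the residue-field isomorphism would give a nontrivial character, contradicting (a) since $\psi_\mathfrak p$ is unramified. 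Assembling these contributions yields an ideal $cR_K$ with $c\subseteq R_0$.

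For $\mathfrak p\mid N$, where $\psi_\mathfrak p$ is a ramified quadratic character of conductor $\mathfrak p$, the plan is to show the $\mathfrak p$-part of $\mathfrak f(\chi_L)$ is a single prime of $K$ above $\mathfrak p$ to the first power. The inert case can be ruled out: a ramified quadratic character of the unramified extension $K_\mathfrak P/F_{0,\mathfrak p}$ factors through the quadratic character of $\mathbb F_{\mathfrak p^2}^\times$, which restricts trivially to $\mathbb F_\mathfrak p^\times$ because every element of $\mathbb F_\mathfrak p^\times$ is a square in $\mathbb F_{\mathfrak p^2}^\times$ for $\mathfrak p$ odd, incompatible with (a). In the split case, (b) reads $\chi_{L,\mathfrak P}\cdot \chi_{L,\bar{\mathfrak P}}=\psi_\mathfrak p$ on $F_{0,\mathfrak p}^\times$; a direct parity check on the four quadratic characters of $F_{0,\mathfrak p}^\times$ forces exactly one of the two factors to be ramified with conductor $\mathfrak p$ and the other unramified. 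In the ramified-in-$K$ case, (a) prevents $\chi_{L,\mathfrak P}$ from being unramified, and the non-dyadic bound forces the conductor exponent to equal one. Gathering one prime above each $\mathfrak p\mid N$, with residue degree one in each of the split and ramified sub-cases, produces $\mathfrak N$ with $\mathrm{Nm}_{K/F_0}\mathfrak N=\prod_{\mathfrak p\mid N}\mathfrak p=N$, yielding the desired factorization $\mathfrak d_{L/K}=cR_K\cdot \mathfrak N$.

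The main obstacle is the local analysis at primes ramifying in $K$, where one must combine (a) with the non-dyadic bound on conductor exponents of quadratic characters to pin $\chi_{L,\mathfrak P}$ down up to an unramified twist. Once that is in place, the remaining cases reduce to a finite check using (a), (b), and (c).
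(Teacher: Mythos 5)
Your plan takes essentially the same route as the paper: the conductor--discriminant formula reduces the claim to a prime-by-prime computation of the conductor of $\chi_L$, and that local computation is driven by Lemma \ref{lemma: central character}, exactly as the paper carries out in Lemma \ref{lemma: technical lemma on the fields}. The one variation is at split $p\nmid N$: you equate the conductor exponents at $\p$ and $\p'$ by combining the Galois transport $\chi_L^\sigma=\chi_{L'}$ (which you correctly justify via the $D_{2\cdot 4}$-structure of $\Gal(\cM/F_0)$) with the product identity $\chi_L\chi_{L'}=\psi\circ\Norm_{K/F_0}$, whereas the paper reads off $\chi_{L,\p}\cdot\chi_{L,\p'}|_{R_{0,p}^\times}=1$ directly from the central-character relation; these are two equivalent uses of the same lemma, so the difference is cosmetic. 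One small caveat, which is shared with the source rather than a defect of your plan: in the ramified-$p$, $p\nmid N$ step you invoke tameness (``factors through the nontrivial quadratic character of $\mathbb F_p^\times$''), which is automatic only at odd $p$; if a dyadic prime not dividing $N$ ramified in $K$, both your argument and the dichotomy in Lemma \ref{lemma: technical lemma on the fields}(2) would need an additional word, and neither the paper nor your plan spells this out.
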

\begin{proof}
 By the conductor-discriminant formula $\mathfrak d_{L/K}$ equals the conductor of $\chi_L$. Then the proposition is a
consequence of the fact that the central character of $\chi_L$ is $\psi$, which has conductor $N$. We give the precise
statements from which Proposition \ref{proposition: the discriminant of L/K} follows as Lemma \ref{lemma:
technical lemma on the fields} below.
\end{proof}

\begin{lemma}\label{lemma: technical lemma on the fields}
\begin{enumerate}[(1)]
 \item If $p\subset F_0$ is a prime  such that $p\mid N$, then either  $p$ splits or ramifies in  $K$. In both
cases, exactly  one of the primes above $p$  exactly divides the conductor of $\chi_L$.
\item Let $p\subset F_0$ be a prime  such that $p\nmid N$ and $\p^e$ divides exactly the conductor of $\chi_L$  for some
prime $\p\subset K$ above $p$. Then either $p$ is inert in $K$ or splits as $p\cdot R_K=\p\p'$ and $(\p')^e$
divides exactly the conductor of $\chi_L$.
\end{enumerate}
\end{lemma}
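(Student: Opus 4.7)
The proof is a local analysis of $\chi_L$ prime-by-prime, using three consequences of Lemma~\ref{lemma: central character}: (i) the central character of $\chi_L$ is $\psi$; (ii) $\chi_L\chi_{L'} = \psi\circ\Norm_{K/F_0}$; and (iii) $\chi_{L'}(x) = \chi_L(x^\sigma)$ for $\sigma$ the nontrivial element of $\Gal(K/F_0)$. Throughout we use that $\psi$ has conductor $N$, which is squarefree and coprime to $2$, so that at any prime $p\mid N$ (necessarily odd) every quadratic character of $K_\mathfrak{p}^\times$ has conductor at most~$1$; hence all local conductor computations at primes dividing $N$ reduce to the residue field.

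For Part~(1), fix a prime $p\mid N$ of $F_0$. If $p$ were inert in $K$, then by (i) the restriction of $\chi_{L,\mathfrak{p}}$ to $k_p^\times$ would coincide with the nontrivial quadratic character $\psi_p|_{k_p^\times}$; but the unique nontrivial quadratic character of the cyclic group $k_\mathfrak{p}^\times \simeq \F_{q^2}^\times$ is trivial on the index-$(q+1)$ subgroup $k_p^\times$ (since $q+1$ is even), a contradiction. If $pR_K = \mathfrak{p}\mathfrak{p}'$ splits, combining (ii) and (iii) with $K_\mathfrak{p} \simeq K_{\mathfrak{p}'} \simeq F_{0,p}$ yields the local identity $\chi_{L,\mathfrak{p}} \cdot \chi_{L,\mathfrak{p}'} = \psi_p$, a tamely ramified character; an inspection of tame quadratic characters of $F_{0,p}^\times$ shows exactly one factor is ramified, so exactly one of $\mathfrak{p},\mathfrak{p}'$ divides the conductor of $\chi_L$ exactly once. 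If $pR_K = \mathfrak{p}^2$ ramifies, then (i) gives $\chi_{L,\mathfrak{p}}|_{R_{0,p}^\times} = \psi_p|_{R_{0,p}^\times}$, which is nontrivial, so $\chi_{L,\mathfrak{p}}$ is nontrivial on $R_{K,\mathfrak{p}}^\times$ and the tameness bound yields conductor exactly $\mathfrak{p}$.

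For Part~(2), fix $p\nmid N$, so $\psi_p$ is unramified. If $pR_K = \mathfrak{p}\mathfrak{p}'$ splits, the identity $\chi_{L,\mathfrak{p}} \cdot \chi_{L,\mathfrak{p}'} = \psi_p$ (now unramified) forces $\chi_{L,\mathfrak{p}}$ and $\chi_{L,\mathfrak{p}'}$ to have the same conductor exponent, proving the second assertion. To exclude $p$ ramified in $K$ one argues at the unique prime $\mathfrak{p}$ above $p$: (ii) makes $\chi_L\chi_{L'}$ unramified at $\mathfrak{p}$, while (iii) gives equal conductors there, so $\chi_{L,\mathfrak{p}}|_{R_{K,\mathfrak{p}}^\times}$ is $\sigma$-invariant and factors as $\tilde\chi\circ\Norm$; combining with (i) one forces $\tilde\chi$ trivial on $(R_{0,p}^\times)^2$, and the identity $\Norm(R_{K,\mathfrak{p}}^\times) = (R_{0,p}^\times)^2$ (Hensel's lemma for $p$ odd) makes $\chi_{L,\mathfrak{p}}$ unramified on $R_{K,\mathfrak{p}}^\times$, contradicting $e\geq 1$.

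The main obstacle is the potentially dyadic subcase of Part~(2) where $p=2$ ramifies in $K$: Hensel's lemma no longer identifies $\Norm(R_{K,\mathfrak{p}}^\times)$ with $(R_{0,p}^\times)^2$, so a finer local norm computation --- or a direct analysis using the explicit description $L = K\bigl(\sqrt{\alpha + \alpha^\tau + 2\sqrt{\alpha\alpha^\tau}}\bigr)$ --- is needed to close the argument there.
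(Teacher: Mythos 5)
Your argument follows the paper's proof very closely: both Part (1) and the split subcase of Part (2) use the same three ingredients (the central character of $\chi_L$ is $\psi$, the relation $\chi_L\chi_{L'}=\psi\circ\Norm_{K/F_0}$, and the action of $\sigma$), reduced to the residue field by tameness at $p\mid N$, and the inert/split/ramified trichotomy is handled with essentially the same local computations. The only place where you diverge is the ramified subcase of Part (2): the paper disposes of it implicitly by asserting that triviality of the localized central character forces one of two explicit situations (inert, or split with matching conductors), whereas you argue it directly by noting that $\chi_{L,\mathfrak p}|_{R_{K,\mathfrak p}^\times}$ is $\sigma$-invariant, hence factors through $\Norm$, and then invoking $\Norm(R_{K,\mathfrak p}^\times)=(R_{0,p}^\times)^2$ for odd $p$. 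This is a somewhat more transparent way to reach the same conclusion; for odd ramified $p$ it could also be done the paper's way, noting that $R_{0,p}^\times/(1+p)\to R_{K,\mathfrak p}^\times/(1+\mathfrak p)$ is an isomorphism of residue fields, so triviality on the left forces $\chi_{L,\mathfrak p}$ unramified.

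Your flag about the dyadic ramified subcase of Part (2) is well taken: the paper's dichotomy (i)/(ii) is not actually exhaustive a priori, since for a wildly ramified dyadic $\mathfrak p$ with conductor exponent $e\geq 2$ the inclusion $R_{0,p}^\times/(1+p^{\lceil e/2\rceil})\hookrightarrow R_{K,\mathfrak p}^\times/(1+\mathfrak p^e)$ can be strict without $p$ being inert. So the paper is glossing over exactly the same point you identify. To genuinely close it one would either need to show dyadic ramification of $K/F_0$ cannot occur under the standing hypotheses, or carry out the finer norm computation you allude to, or use the explicit description of $L$ inside $\mathcal M$ — none of which the paper does explicitly. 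So this is not a defect of your proposal relative to the paper; if anything you are being more candid about the gap.
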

\begin{proof}
To prove  (1), let $p$ be a prime of $F_0$ dividing $N$. If $p$ splits as $\mathfrak
p\mathfrak p'$ in $K$ then by Lemma \ref{lemma: central character} the composition
\[
 R_{0,p}^\times \lra R_{K,\p}^\times\times  R_{K,\p'}^\times \stackrel{\chi_{L,\p}\cdot\chi_{L,\p'}}{\lra} \{\pm 1\}
\]
equals $\psi_p$. Since by assumption $p$ is not dyadic and $N$ is squarefree,   $\psi_p$ is the unique character
or order $2$
of $R_{0,p}^\times/(1+p)$. Since $R_{K,\p}^\times/(1+\mathfrak p)\simeq R_{K,\p'}^\times/(1+\mathfrak
p')\simeq R_{0,p}^\times/(1+p)$ we see that the character
\[
 \begin{matrix}
  R_{0,p}^\times/(1+p)\times R_{0,p}^\times/(1+p) & \stackrel{\chi_{L,\p}\cdot\chi_{L,\p'}}{\lra} & \{\pm 1\}\\
(x,x) & \longmapsto & \chi_{L,\p}(x)\cdot \chi_{L,\p'}(x)
 \end{matrix}
\]
has order $2$. This implies that exactly one of $\chi_{L,\p}$ or $\chi_{L,\p'}$ is trivial. Suppose that
$\chi_{L',\p}$ is trivial
and $\chi_{L,\p}$ has order $2$. Then $\p$ divides exactly the conductor of $\chi_L$ and
$\p'$ does not divide it.

Suppose now that $p\mid N$ is ramified in $K$ so that  $pR_K=\p^2$. Then by Lemma \ref{lemma: central character} the
composition
\[
 R_{0,p}^\times {\lra} R_{K,\p}^\times \stackrel{\chi_{L,\p}}{\lra} \{\pm 1\}
\]
equals $\psi_p$, which is a character of order $2$ factorizing through $R_{0,p}^\times/(1+p)$. This implies that
$\chi_{L,\p}$ necessarily factorizes through $R_{K,\p}^\times/(1+\p)$, because $R_{K,\p}^\times/(1+\p)\simeq
R_{0,p}^\times/(1+p)$. Therefore $\p$ divides exactly
the conductor $\mathfrak d_{L/K}$ of $\chi_L$.

Suppose now that $p\mid N$ is inert in $K$, so that $pR_K=\p$. Again by Lemma \ref{lemma: central character}  the
character $\psi_p$ equals
\begin{equation}\label{eq: composition of local characters}
 R_{0,p}^\times \longrightarrow R_{K,\p}^\times \stackrel{\chi_{L,\p}}{\lra}
\{\pm 1\},
\end{equation}
 the composition of the natural inclusion with $\chi_{L,\p}$. But the map in \eqref{eq: composition of local characters}
is trivial. Indeed, in this case $\F_p^\times=R_{0,p}^\times/(1+p)$ is strictly contained in
$\F_\p^\times=R_{K,\p}^\times/(1+\p)$. Then  $\chi_{L,\p}$ is the unique quadratic character of $\F_\p^\times$, and such
character is
always trivial on $\F_p^\times$. The fact that $\psi_p$ is trivial  contradicts the fact that $p\mid N$, so this
case does not occur.

To prove (2) we use again that the localization at $p$ of the composition
\begin{equation}\label{eq: composition of hecke characters}
 \A_{F_0}^\times \lra \A_K^\times  \stackrel{\chi_L}{\lra} \{\pm 1\}
\end{equation}
coincides with $\psi_p$, and therefore it is trivial  because in this case $p\nmid N$. But $\chi_{L,\p}$ has order $2$,
so that in
particular it is not trivial. Suppose that  $\chi_{L,\p}$ has conductor $\p^e$ for some $e\geq 1$. Observe that now,
since $\p$ can be dyadic, the exponent $e$ may be greater than $1$ (in fact, it is equal to $1$ except if $\p$ is
dyadic, in which case it may also be $2$ or $3$). In any case, the localization of
\eqref{eq: composition of hecke characters} at $\p$ is trivial only in one of the following situations:
\begin{enumerate}[i)]
 \item The inclusion $\A_{F_0}^\times \lra \A_K^\times$ localizes to a strict
inclusion $R_{0,p}^\times/(1+p^e)\hookrightarrow R_{K,\p}^\times/(1+\p^e)$.
\item The map in \eqref{eq: composition of hecke characters} localizes to
\[
  \begin{matrix}
 R_{0,p}^\times/(1+p^e)&\lra & R_{0,p}^\times/(1+p^e)\times R_{0,p}^\times/(1+p^e) &
\stackrel{\chi_{L,\p}\cdot\chi_{L,\p'}}{\lra} & \{\pm 1\}\\x&\longmapsto &
(x,x) & \longmapsto & \chi_{L,\p}(x)\cdot \chi_{L,\p'}(x)
 \end{matrix}
\]
and $\chi_{L,\p}=\chi_{L,\p'}$.
\end{enumerate}
In the first case $p$ is inert in $K$. In the second case $p$
splits as $p\cdot R_K=\p\p'$ and $(\p')^e$ divides exactly the conductor of $\chi_L$.

\end{proof}

\begin{proposition}\label{proposition: main results about the fields} Let  $\varphi\colon
R_c\hookrightarrow \cO$ be a normalized optimal embedding, with $c$ as in Proposition \ref{proposition: the
discriminant of L/K}. The field $L_\varphi^\beta$ contains  $L$ if and only
if $\beta_j(-1)=-1$ for $j=2,\dots,r$.
\end{proposition}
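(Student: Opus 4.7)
The plan is to translate the inclusion $L \subseteq L_\varphi^\beta$ into a statement about the quadratic Hecke character $\chi_L\colon \A_K^\times/K^\times \to \{\pm 1\}$ cutting out $L/K$. By class field theory, $L_\varphi^\beta$ corresponds to the open subgroup $K_\infty^\beta \times U_\varphi^+ \subset \A_K^\times$, and hence $L \subseteq L_\varphi^\beta$ is equivalent to $\chi_L|_{K_\infty^\beta \times U_\varphi^+} = 1$. I would then verify this in two steps: the archimedean factor detects the $\beta$-condition, while the finite factor is always killed.

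For the archimedean part I would invoke Lemma~\ref{lemma: K ATR and L TC}: $L$ is totally imaginary. Hence $\chi_{L,v_1}$ is automatically trivial on $\C^\times$, and at every real place $w$ of $K$ above $v_j$ (for $j = 2,\dots,r$) the completion $L_w$ is $\C$, so that $\chi_{L,w}$ is the sign character on $\R^\times$. As a sanity check, the ATC hypothesis forces the conjugates $\alpha,\alpha^\tau$ of the generator of $M/F$ to be negative at every real place of $F$ above such $v_j$, so that $\delta := \alpha + \alpha^\tau + 2\sqrt{\alpha\alpha^\tau}$ satisfies $w(\delta) < 0$ by AM--GM (using $\alpha \notin F_0$), and therefore $L = K(\sqrt\delta)$ is complex at $w$. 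Reading off the definition of $K_\infty^\beta$, the restriction of $\chi_L$ to the local factor above $v_j$ is trivial iff that factor is $\R_{>0}$ rather than $\R^\times$, which by construction is the condition $\beta_j(-1) = -1$.

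For the finite part I would combine Lemma~\ref{lemma: central character} (the central character of $\chi_L$ is $\psi$) with Proposition~\ref{proposition: the discriminant of L/K} (the conductor of $\chi_L$ equals $c \mathfrak N$ with $\mathrm{Nm}_{K/F_0} \mathfrak N = N$). By Lemma~\ref{lemma: technical lemma on the fields} no prime dividing $N$ is inert in $K$, so $N = N^+$ and we may take $\mathfrak N = \mathfrak N^+$ in the factorization $NR_K = \mathfrak N^+\bar{\mathfrak N}^+$. Writing a typical element of $U_c = \hat R_0^\times(1+c\hat R_K)$ as $u_0 u_1$ and decomposing $\chi_L$ prime by prime: the conductor kills the $(1+cR_{K,\p})$-piece at every $\p\mid cR_K$; at $\p \mid \mathfrak N^+$ the local component $\chi_{L,\p}$ is the unique order-$2$ character on $R_{K,\p}^\times/(1+\p) \simeq (R_0/p)^\times$, which equals $\psi_p$; at $\p \mid \bar{\mathfrak N}^+$ and outside the conductor $\chi_{L,\p}$ is trivial. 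Combined with $\chi_L|_{\hat R_0^\times} = \psi$, this yields $\chi_L(u_0 u_1) = \psi(u_0 u_1 \bmod N^+)$, which is precisely the defining character of $U_\varphi^+$ inside $U_c$; hence $\chi_L|_{U_\varphi^+} = 1$.

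The main obstacle is the local identification of $\chi_{L,\p}$ at split primes $\p \mid \mathfrak N^+$: the central-character relation only pins down the product $\chi_{L,\p}\chi_{L,\bar\p} = \psi_p \circ \mathrm{Nm}$, so one must invoke the conductor information from Lemma~\ref{lemma: technical lemma on the fields} together with the uniqueness of the order-$2$ character on $(R_0/p)^\times$ to pin down each local factor individually.
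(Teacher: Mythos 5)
Your proposal is correct and follows essentially the same strategy as the paper: translate $L\subseteq L_\varphi^\beta$ into the vanishing of $\chi_L$ on $K_\infty^\beta\times U_\varphi^+$, handle the archimedean factor via Lemma~\ref{lemma: K ATR and L TC} (where your AM--GM check makes explicit what the paper leaves implicit), and handle the finite factor using Lemma~\ref{lemma: central character}, Proposition~\ref{proposition: the discriminant of L/K} and Lemma~\ref{lemma: technical lemma on the fields}. The only difference is stylistic: you identify $\chi_L|_{U_c}$ with the defining character of $U_\varphi^+$ prime by prime, whereas the paper decomposes an element of $U_\varphi^+\cap R_{K,c\mathfrak N}^\times$ as $a_c\cdot a_{\mathfrak N}$ and kills each factor directly.
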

\begin{proof}
 Recall that $$U_c = {\hat R_0}^\times (1+c{\hat R_K}) \subset \hat{K}^\times$$
and that
\[
 U_\varphi^+=\{\beta \in U_c \text{ such that } (\beta)_{\mathfrak N}\in \ker(\psi)\subset (R_0/NR_0)^\times\},
\]
where $\psi$ is the nebentypus of $f_0$ and also the character corresponding to the quadratic extension $F/F_0$.
Here $(\beta)_{\mathfrak N}$ denotes the image of the local term of the id\`ele $\beta$ in the quotient
$R_{K,\mathfrak N}^\times/(1+{\mathfrak N}\cdot R_{K,{\mathfrak N}})\simeq (R_0/N R_0)^\times$. The field
$L_\varphi^\beta$ is defined by
\[
 \Gal(L_\varphi^\beta/K)\simeq \A_K^\times/K^\times U_c^+K_\infty^\beta,
\]
where $ K_\infty^\beta$ is as in \eqref{eq: def open compact subgroup}.
Now let $\chi_L\colon \A_K^\times \ra\{\pm 1\}$ be the quadratic character corresponding to $L$. Observe that by class
field theory $L\subset L_\varphi^\beta$ if and only if $U_\varphi^+ K_\infty^\beta\subset \ker \chi_L$.

Let $\chi_L=\prod_v \chi_{L,v}$ be the decomposition of $\chi_L$ as a product of local characters. By the
conductor--discriminant formula the conductor of $\chi_L$ is equal to   $\mathfrak d_{L/K}=c\cdot {\mathfrak N}$. This
means that
$\chi_{L,f}=\prod_{\p\nmid \infty}\chi_\p$
factorizes through a character $$\chi_{L,f}\colon R_{K,c\cdot {\mathfrak N}}^\times /(1+c\cdot {\mathfrak N} R_{K,c\cdot
{\mathfrak N}})\ra \{\pm
1\}.$$
First of all we check that $\chi_L(U_\varphi^+\cap R^\times_{K,c\cdot{\mathfrak N}})=1$.
Let $a=(a_\p)_\p$ be an element in $U_\varphi^+\cap R^\times_{K,c\cdot{\mathfrak N}}$. We write it as $a=a_c\cdot
a_{\mathfrak N}$, where
$a_c=\prod_{\p\mid c}a_\p$ and $a_{\mathfrak N}=\prod_{\p\mid {\mathfrak N}}a_\p$.

If $\p\mid c$ then $\chi_{L,\p}(a_\p)=1$ by
the very definition of $U_\varphi^+$. Namely, if $e=v_\p(c)$ then $\chi_{L,\p}$ has conductor $\p^e$ so it can be
regarded as
a character $$\chi_{L,\p}\colon R^\times_{K,\p}/(1+\p^e R_{K,\p})\ra\{\pm 1\}.$$ But $a_\p$ belongs to
$(1+\p^e R_{K,\p})$ by the definition of $U_\varphi^+$, so that $\chi_{L,p}(a_\p)=1$.  Since this is valid for any
$\p\mid c$
we see that $\chi_L(a_c)=1$.

 Since ${\mathfrak N}$ has norm $N$ and $N$ is
squarefree we have that $R_{K,{\mathfrak N}}^\times/(1+{\mathfrak N}\cdot R_{K,{\mathfrak N}})\simeq (R_0/N
R_0)^\times$. Therefore the image of
$a_{\mathfrak N}$ via the map $\A_K^\times\ra R_{K,{\mathfrak N}}^\times/(1+{\mathfrak N}\cdot R_{K,{\mathfrak N}})$ can
be regarded as the image of some
$b\in \A_{F_0}^\times$ via the map $\A_{F_0}^\times \ra \A_K^\times\ra R_{K,{\mathfrak N}}^\times/(1+{\mathfrak N}\cdot
R_{K,{\mathfrak N}})$. By
Lemma \ref{lemma: central character} we have that ${\chi_L}_{|\A_{F_0}}=\psi$. Therefore, by the definition of
$U_\varphi^+$
we  see that $\chi_L(a_{\mathfrak N})=\psi(a_{\mathfrak N})=1$.

Since we have seen that $U_\varphi^+\subseteq \ker \chi_L$, we have that $L\subseteq L_\varphi^\beta$ if and only if $
\chi_L(K_\infty^\beta)=1$. It is clear that for the character $\beta$ such that $\beta_j(-1)=-1$ for $j=2,\dots,r$ this
is true, because then any character of $\A_{K,\infty}^\times$ is trivial when restricted to $K_\infty^\beta$. Suppose
now that $\beta$ is such that $\beta_j(-1)=1$ for some $j$. Then the $j$-th component of $K_\infty^\beta$ is equal to
$\R^\times$, and $\chi_L$ is not trivial restricted to this component because, by Lemma \ref{lemma: K ATR and L TC}, the
field $L$ is
totally imaginary so the  real place $v_j$  extends to a complex place of $L$.
\end{proof}

Now we let $c$  be as in Proposition \ref{proposition: main results about the fields}, and we take
$\beta\colon \Sigma\ra \{\pm 1\}$ to be the character such that $\beta_j(-1)=-1$ for $j=2,\dots, r$. Moreover we let
$\varphi\colon R_c\hookrightarrow \cO$ be an optimal embedding normalized with respect to $\mathfrak N$, with
$\mathfrak N$  as in Proposition \ref{proposition: the discriminant of L/K}. From now on we
grant Conjecture \ref{ATR-gen} so that $P_\varphi^\beta\in A(L_\varphi^\beta)$. Thanks to Proposition
\ref{proposition: main
results
about the fields}  we can set
\begin{equation*}
 P_{A,L}=\Tr_{L_\varphi^\beta/L}(P_\varphi^\beta)\in A(L).
\end{equation*}
If we denote by $C_L=\mathrm{rec}^{-1}(\Gal(L_\varphi^\beta/L))$, then by the reciprocity law of Conjecture
\ref{ATR-gen} $P_{A,L}$ can be computed as
\[
 P_{A,L}=\sum_{a\in C_L}(P_{a\star\varphi}^\beta)\in A(L).
\]

Observe that in Diagram \eqref{diagram of subfields} complex conjugation takes $L$ to $L'$. Therefore the point
$$P_{A,M}:=P_{A,L}+\overline{P_{A,L}}$$ lies in $A(M)$. Finally, we define
$$P_M=\pi(P_{A,M})\in E(M),$$
 where $\pi\colon A\ra E$ is the natural projection, an algebraic map
defined over $F$.

\begin{theorem}\label{conjecture: the point in E is non-torsion} Assume Conjecture \ref{ATR-gen}  holds true for the ATR
extension $K/F$.  Suppose also that the sign of the functional equation of $L(E/F,s)$ is $+1$ and that of $L(E/M,s)$ is
$-1$. Then $P_M$ is non-torsion if and only if $L'(E/M,1)\neq 0$.
\end{theorem}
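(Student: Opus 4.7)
My strategy is to match both sides of the equivalence with the twisted $L$-function $L(f_0/K,\chi_L,s)$ and the Darmon--Gartner point $P_{\chi_L}$, and then invoke part (2) of Conjecture~\ref{ATR-gen}.

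On the analytic side, $\Gal(\cM/F_0)\simeq D_{8}$ admits a unique irreducible $2$-dimensional representation $\rho$, which by Lemma~\ref{lemma: central character}(3) is isomorphic to both $\Ind_K^{F_0}\chi_L$ and $\Ind_F^{F_0}\chi_M$. Frobenius reciprocity then gives $\Ind_M^{F_0}\mathbf{1}\simeq \mathbf{1}\oplus\psi\oplus\rho$, so by Artin formalism
\begin{equation*}
L(E/M,s)^{2}=L(A/M,s)=L(f_0,s)^{2}L(\bar f_0,s)^{2}\,L(f_0/K,\chi_L,s)\,L(f_0/K,\chi_{L'},s),
\end{equation*}
where the last two factors are complex conjugates of each other. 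Comparing orders of vanishing at $s=1$ gives $\ord_{s=1}L(E/M,s)=2\,\ord_{s=1}L(f_0,s)+\ord_{s=1}L(f_0/K,\chi_L,s)$, and combined with the sign hypothesis on $L(E/M,s)$ this yields $L'(E/M,1)\neq 0$ if and only if $L(f_0,1)\neq 0$ and $L'(f_0/K,\chi_L,1)\neq 0$.

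On the geometric side, since $L\subset L_\varphi^\beta$ by Proposition~\ref{proposition: main results about the fields}, the character $\chi_L$ inflates to $G_K:=\Gal(L_\varphi^\beta/K)$, and decomposing the trace along the characters of $\Gal(L/K)$ gives $2P_{A,L}=P_{\mathbf{1}}+P_{\chi_L}$ with $P_{\mathbf{1}}=\Tr_{L_\varphi^\beta/K}(P_\varphi^\beta)\in A(K)$. Applying part (3) of Conjecture~\ref{ATR-gen} to $\tau=c$ produces $\sigma_0\in G_K$ with $c(P_\varphi^\beta)\equiv W_N(\sigma_0(P_\varphi^\beta))$ modulo torsion; combining this with the normality of $G_K$ in $\Gal(L_\varphi^\beta/F_0)$ and the identification $\chi_L^{c}=\chi_{L'}$ (where $L'=c(L)$), a direct calculation yields
\begin{equation*}
c(P_{\mathbf{1}})\equiv\epsilon\,P_{\mathbf{1}},\qquad c(P_{\chi_L})\equiv\epsilon\,\chi_{L'}(\sigma_0)\,P_{\chi_{L'}}\quad\text{modulo torsion,}
\end{equation*}
with $\epsilon\in\{\pm 1\}$ the Atkin--Lehner eigenvalue of $f_0$ at $N$. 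Summing gives $2P_{A,M}\equiv(1+\epsilon)P_{\mathbf{1}}+P_{\chi_L}+\epsilon\,\chi_{L'}(\sigma_0)P_{\chi_{L'}}$ modulo torsion, and the sign hypothesis forces $\epsilon=-1$, so the $P_{\mathbf 1}$ contribution vanishes.

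Finally, projecting via $\pi\colon A\to E$ and exploiting the $\Q_{f_0}$-module structure on $A(\cM)_{\Q}$ (whose $\rho$-isotypic component has $\Q_{f_0}$-rank one), one sees that $\pi(P_{\chi_L}-\chi_{L'}(\sigma_0)P_{\chi_{L'}})$ is non-torsion if and only if $P_{\chi_L}\neq 0$, which by Conjecture~\ref{ATR-gen}(2) is equivalent to $L'(f_0/K,\chi_L,1)\neq 0$. The main obstacle will be to verify that the sign data really forces $\epsilon=-1$; a secondary difficulty is to handle the residual case $L(f_0,1)=0$, in which $L'(E/M,1)=0$ automatically but one must still show that $P_M$ is torsion, presumably by invoking Kolyvagin--Gross--Zagier--Zhang for $A/F_0$ in the rank-zero subcase or by an independent vanishing argument.
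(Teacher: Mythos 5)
Your analytic preparation is essentially sound and equivalent to the paper's: the paper simply writes $L(E/M,s)=L(E/F,s)L(E/F,\chi_M,s)$ and uses $\Ind_F^{F_0}\chi_M\simeq\Ind_K^{F_0}\chi_L$ to identify $L(E/F,\chi_M,s)=L(f_0/K,\chi_L,s)$; your decomposition $\Ind_M^{F_0}\mathbf 1\simeq\mathbf 1\oplus\psi\oplus\rho$ gives the same numerology. The real divergence, and the gap, is on the geometric side.

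The paper's key step is to apply part (3) of Conjecture~\ref{ATR-gen} not to complex conjugation $c$ at $v_1$, but to the purely archimedean id\`ele $\xi_j=(1,\dots,-1,\dots,1)\times(1,1,\dots)$ of \eqref{cc-idele}, whose finite part is trivial and which corresponds to the element $s\in\Gal(L_\varphi^\beta/K)$ inducing complex conjugation at the ATR place $v_j$, $j\geq 2$. Because the character $\beta$ chosen in \S\ref{subsection: construction of the points} is the \emph{nontrivial} one, $\beta(\xi_\infty)=-1$, hence $s(P_\varphi^\beta)=-P_\varphi^\beta$. This immediately yields $P_{\mathbf 1}=\Tr_{L_\varphi^\beta/K}(P_\varphi^\beta)=0$ (equivalently $P_\chi=2P_{A,L}$), and there is nothing left to eliminate. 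Your proposal misses this and instead invokes the Atkin--Lehner half of Conjecture~\ref{ATR-gen}(3) applied to $\tau=c$, trying to make $P_{\mathbf 1}$ drop out via a scalar $\epsilon\in\{\pm 1\}$ that you call ``the Atkin--Lehner eigenvalue of $f_0$ at $N$''. This object does not exist: $f_0$ has nontrivial quadratic nebentypus $\psi$, so $W_N$ is not an involution on the $f_0$-line but sends $f_0$ to $\lambda_N\overline f_0$ with $\lambda_N$ a complex number of modulus one (in the worked example $\lambda_N=(-1+2\sqrt{-6})/5$). Even granting the relation $c(P_{\mathbf 1})\equiv W_N(P_{\mathbf 1})$, the inference $W_N(P_{\mathbf 1})=\epsilon P_{\mathbf 1}$ is not justified, and the subsequent claim ``the sign hypothesis forces $\epsilon=-1$'' has no basis. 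Your final step, showing that $\pi$ applied to $P_{\chi_L}-\chi_{L'}(\sigma_0)P_{\chi_{L'}}$ is non-torsion iff $P_{\chi_L}\neq 0$, inherits the same problem, since it again relies on $W_N$ acting by a sign.

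Two smaller remarks. First, you correctly flag the residual case $L(f_0,1)=0$ as a difficulty for the converse direction; the published proof only argues the implication $L'(E/M,1)\neq 0\Rightarrow P_M$ non-torsion, so this is a fair observation but not a defect specific to your approach. Second, the key missing ingredient for you to recover the argument is precisely the archimedean piece of the reciprocity law at the ATR places $v_2,\dots,v_r$ together with the nontriviality of $\beta$; once you have $s(P_\varphi^\beta)=-P_\varphi^\beta$, the Atkin--Lehner gymnastics become unnecessary and the computation $P_\chi=2P_{A,L}$ goes through directly.
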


\begin{proof}
If $L'(E/M,s)\neq 0$ then $L(E/M,s)$ vanishes with order $1$ at $s=1$. Since $$L(E/M,s)=L(E/F,s)L(E/F,\chi_M,s)$$ we see
that $L(E/F,\chi_M,s)$ vanishes with order $1$
at $s=1$. By  Lemma \ref{lemma: central character} we have that $\Ind_F^{F_0}\chi_M\simeq \Ind_{K}^{F_0}\chi_L$. Then
\begin{eqnarray*}
 L(E/F,\chi_M,s)&=&L(f_0/F\otimes\chi_M,s)=L(f_0\otimes
\mathrm{Ind_F^{F_0}\chi_M,s})\\&=&L(f_0\otimes
\mathrm{Ind_K^{F_0}\chi_L,s})=L(f_0/K\otimes \chi_L,s)\\&=&L(f_0/K,\chi_L,s),
\end{eqnarray*}
and therefore $L(f_0/K,\chi_L,s)$ vanishes with order $1$ at $s=1$. If we denote by  $\chi\colon
\Gal(L_\varphi^\beta/K)\ra \C$  the induction of $\chi_L$, then  part (2) of \ref{ATR-gen} implies that the point
\[
P_\chi= \sum_{\sigma \in
\Gal(L^\beta_\varphi/K)} \chi(\sigma)^{-1} \sigma(P^\beta_\varphi) \in A(L_\varphi^\beta)
\]
is non-torsion.

In order to apply the reciprocity law, let us view for a moment the fields $K$, $L$ and $L_\varphi^\beta$ as subfields
of $\C$
via a place of $\Qb$ extending $v_j$, for a fixed $j\in\{2,\dots,r\}$. Since $K$ is real under $v_j$ and $L$ is complex,
we see that complex conjugation
induces an element in $s\in \Gal(L_\varphi^\beta/K)$ that restricts to a generator of $\Gal(L/K)$. But $s$
corresponds under the reciprocity map to the id\`ele
\begin{equation}\label{cc-idele}
\xi_j:=(\xi_\infty,\xi_f)=(1,{\dots},1,\stackrel{j)}{-1},1,\dots,1)\times (1,1,\dots)\in K_\infty^\times\times \hat{K}^\times,
\end{equation}
so by part (3) of \ref{ATR-gen} we have that  $s(P_\varphi^\beta)=\beta(\xi_\infty)P_\varphi^\beta=-P_\varphi^\beta$. Then
we have that
\begin{eqnarray*}
 P_\chi &=& \sum_{\sigma \in
\Gal(L^\beta_\varphi/L)}  \sigma(P^\beta_\varphi)+ \sum_{\sigma \in
\Gal(L^\beta_\varphi/L)}  \chi(\sigma s)\sigma s(P^\beta_\varphi)\\&=&\sum_{\sigma \in
\Gal(L^\beta_\varphi/L)}  \sigma(P^\beta_\varphi)+ \sum_{\sigma \in
\Gal(L^\beta_\varphi/L)}  \chi_L( s)\sigma (-P^\beta_\varphi)\\&=&
2\cdot
\mathrm{Tr}_{L_\varphi^\beta/L}(P_\varphi^\beta)=2\cdot P_{A,L},
\end{eqnarray*}
which implies that $P_{A,L}$ is non-torsion. Moreover, as $s(P_\varphi^\beta)=-P_\varphi^\beta$ we have that
$P_{A,L}\in A(L)^{\chi_L}$. Then $P_{A,M}=P_{A,L}+\overline{P_{A,L}}$ belongs to $A(M)$ and is non-torsion as well.
Since the projection $\pi\colon A\ra E$ is defined over $F$ and $A(M)\simeq E^2(M)$, we see that $P_M=\pi(P_A(M))$
belongs to $E(M)$ and it is of infinite order.
\end{proof}

 Let $W_N$ denote the Atkin-Lehner involution on $S_2(\Gamma_\psi(N))$ corresponding to the ideal $N$.
By abuse of notation we also denote by $W_N$
the  involution that it induces on $A$. Then the splitting of the variety $A$ over $F$ is accomplished by
the
action of $W_N$. More precisely we have that
\[
 A\sim_F (1+W_N)A\times (1-W_N)A.
\]
Let $\lambda_N$  be the pseudoeigenvalue of $f_0$ corresponding to $N$; that is, the complex number satisfying
that $W_N(f_0)=\lambda_N \cdot \overline{f}_0$. Observe that the modular form
$$\alpha_{f_0}^\beta:=\frac{1}{1+\lambda_N}(f_0+W_N(f_0))$$ is
normalized. In view of Conjecture \ref{Oda-conj} the lattice of $E$ can be computed as
\begin{equation}\label{eq: lattice for the sum of the form and its atkin-lehner}\Lambda_{E}=(\Omega_2^{-}\cdots
\Omega_r^{-})^{-1}\cdot  \langle\int_{
Z}\alpha_{f_0}^\beta \rangle,
\end{equation}
where $Z\in H_2(X_\psi(\C),\Z) $ runs over the cycles  such that $ \int_Z(
\omega^\beta_{f_0}-W_N(\omega^\beta_{f_0}))=0$.  From this we obtain the following
explicit analytic formula for the points $P_M$.
\begin{theorem}\label{theorem: explicitc analytic formula}
Let
\begin{equation}\label{eq: explicit formula for J_M}
 J_\cM=(\Omega_2^{-}\cdots \Omega_r^{-})^{-1}\cdot \left(\sum_{a\in C_L}
\int_{\tilde{T}_{a\star \varphi}}\alpha_{f_0}^\beta \right).
\end{equation}
 Then the  point $P_M$ can be computed as
\begin{equation}\label{eq: explicit expression for P_M}
 P_M=\eta \left(J_\cM+\overline{J}_\cM\right),
\end{equation}
where $\eta$ is the Weierstrass parametrization $\eta\colon \C/\Lambda_{E}\ra
E(\C)$ and the bar denotes complex conjugation.
\end{theorem}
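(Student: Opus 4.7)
The plan is to unwind the construction of $P_M$ given in \S\ref{subsection: construction of the points} through the complex uniformization, and match the resulting expression against the right-hand side of \eqref{eq: explicit expression for P_M}.

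First I would carry out the Abel--Jacobi step. By Definition \ref{Darmon-point} and the isogeny $\eta_\beta$ of Conjecture \ref{Oda-conj}, under the identification $A(\C)\simeq \C^g/\Lambda_A^\beta$ the point $P^\beta_\varphi$ is represented by the class of $\Omega_\beta^{-1}\int_{\tilde T_\varphi}\omega_{f_0}^\beta$, where $\Omega_\beta = \Omega_2^-\cdots\Omega_r^-$ because $\beta_j(-1) = -1$ for $j=2,\dots,r$. Summing over the coset $C_L$ in accordance with the reciprocity law of Conjecture \ref{ATR-gen}(3) yields
\[
 P_{A,L} \equiv \Omega_\beta^{-1}\sum_{a\in C_L}\int_{\tilde T_{a\star\varphi}}\omega_{f_0}^\beta \pmod{\Lambda_A^\beta}.
\]

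Next I would carry out the projection step. Since $A\sim_F (1+W_N)A\times (1-W_N)A$ and $E$ is identified (up to isogeny) with the $(1+W_N)$-component, the map $\pi\colon A\to E$ on complex uniformizations sends a period of $\omega_{f_0}^\beta$ to the period of $(1+W_N)\omega_{f_0}^\beta = (1+\lambda_N)\,\alpha_{f_0}^\beta$ on the $E$-factor. The scalar $(1+\lambda_N)^{-1}$ built into the definition of $\alpha_{f_0}^\beta$ is chosen precisely so that the resulting lattice of periods $\Omega_\beta^{-1}\langle\int_Z\alpha_{f_0}^\beta\rangle$ coincides with $\Lambda_E$ by \eqref{eq: lattice for the sum of the form and its atkin-lehner}. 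It follows that $\pi(P_{A,L}) = \eta(J_\cM)$ in $E(\C)$.

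To conclude, I would invoke complex conjugation. Since $\pi$ is defined over $F$, which is real under $v_1$, and since $E$ is real under $v_1$ (so $\overline{\Lambda_E}=\Lambda_E$ and $\eta(\overline{z}) = \overline{\eta(z)}$), we have
\[
 P_M = \pi(P_{A,M}) = \pi(P_{A,L}) + \pi(\overline{P_{A,L}}) = \eta(J_\cM) + \overline{\eta(J_\cM)} = \eta(J_\cM + \overline{J_\cM}),
\]
using that $\eta$ is a group homomorphism. The main obstacle is the projection step: one must carefully verify that the constant $(1+\lambda_N)^{-1}$ in the definition of $\alpha_{f_0}^\beta$ is exactly what is needed to identify the $(1+W_N)$-projection of the Oda--Shioda uniformization of $A$ with the Weierstrass uniformization of $E(\C)$ by the lattice $\Lambda_E$. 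This hinges on the compatibility of the $\beta$-symmetrization construction with the Atkin--Lehner action on Hilbert modular forms and on the fact that $\alpha_{f_0}^\beta$ is the normalized form attached to the elliptic curve $E$.
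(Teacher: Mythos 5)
Your proposal is correct and follows essentially the same line of reasoning as the paper's proof: identify $\pi(P_{A,L})$ with $\eta(J_{\cM})$ via the period computation (the paper encodes this in the assertion \eqref{eq: lattice for the sum of the form and its atkin-lehner} just before the theorem, deduced from Conjecture \ref{Oda-conj}), and then obtain $P_M$ by applying complex conjugation and using that $\eta$ commutes with it because $E$ is defined over $F\subset\R$. The only cosmetic difference is that the paper phrases the middle step as $P_M=\Tr_{\cM/M}(\eta(J_\cM))$ after observing that $\eta(J_\cM)\in E(\cM)$ (since $W_N$, hence $\pi$, is defined over $F$ and $FL=\cM$), whereas you unwind the projection step more explicitly; the caveat you flag about the $(1+\lambda_N)^{-1}$ normalization is precisely what \eqref{eq: lattice for the sum of the form and its atkin-lehner} is asserting, so the paper silently absorbs it there rather than in the body of the proof.
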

\begin{proof}
 The Atkin--Lehner involution $W_N$ is defined over $F$  and $FL=\cM$, so $\eta(J_\cM)$ belongs to $E(\cM)$.
We recall that we are viewing $\cM$ as a subfield of $\C$ by means of $v_1$. Under this embedding $\cM$ is complex and
$M$ is real and therefore
\[
 P_M=\Tr_{\cM/M}(\eta(J_\cM))=\eta(J_\cM)+\overline{\eta(J_\cM)}.
\]
Since $E$ is defined over $F$ and $F\subseteq \R$ we have that Weierstrass map commutes with complex conjugation, and
 \eqref{eq: explicit expression for P_M} follows.
\end{proof}

\begin{remark}
Observe that $W_N(P_L)=\mathrm{Tr}_{{L'}_\varphi^\beta/L'}(W_N(P_\varphi^\beta))$ belongs to $A(L')$. Since complex
conjugation does not fix $K$,  by part {\it (3)} of
Conjecture \ref{ATR-gen} we see that
$$\overline{P_{A,L}}=W_N(\sigma(P_{A,L}))+ P_t$$
for some $\sigma\in \Gal(L/K)$ and some $P_t\in A(L')_{\mathrm{tors}}$. If $\sigma$ turns out to be trivial and $P_t$
belongs to $A(F_0)_{\mathrm{tors}}$, then the point $P_{A,L}+W_N(P_{A,L})$ is already defined over $M$. In this case
$\eta(J_\cM)$ lies in $ E(M)$ and $P_M$ coincides, up to torsion, with $2\cdot \eta(J_\cM)$.  As we will see,
this is the situation encountered in the example of \S\ref{subsection: numerical examples}.
\end{remark}

\begin{remark}
Observe that the integral appearing in the formula of Theorem \ref{theorem: explicitc analytic formula} is completely
explicit. Indeed, in the case where $f_0$ has trivial nebentypus, an  algorithm for determining  the chains
$\tilde{T}_\varphi$ is
worked out in \cite{DL}, based on the approach taken in \cite{dasgupta-thesis}. As we showed in \S \ref{subsection: Darmon's algorithm for ATR points},
Darmon--Logan's method adapts to provide an explicit description of $\tilde{T}_\varphi$ also in the current
setting, in which $f_0$ has quadratic nebentypus.
\end{remark}

\subsection{Comparison with Gartner's ATC points}\label{ComparingGartner}

Let us keep the notations of the previous section \ref{subsection: construction of the points}; in particular $E/F$ is an elliptic curve defined over the totally real field $F$ and $M/F$ is an ATC quadratic extension. The curve $E$ is modular: its isogeny class corresponds by the Eichler-Shimura construction to the Hilbert modular form $f$ that one obtains from $f_0$ by base-change to $F$, in such a way that
$$
L(E,s)=L(f,s)
$$
as in \eqref{Lf}. Write $N_E\subseteq R_F$ for the conductor of $E$, that is to say, the level of $f$. It is related to the level $N$ of $f_0$ by the formula
\begin{equation}\label{NE}
\mathrm{Norm}_{F/F_0}(N_E)\cdot \disc(F/F_0)^2=N^2.
\end{equation}

We place ourselves under the hypothesis of Theorem \ref{conjecture: the point in E is non-torsion}, so that we assume $N_E$ is square-free, the sign of the functional equation of $L(E/F,s)$ is $+1$ and that of $L(E/M,s)$ is $-1$. 

As discussed in \S \ref{zintroduction}, our point $P_M$ in $E(M)$ is expected to coexist with another point
$P_M^{\mathrm{Gar}}$ (\cite[\S 5.4]{Gartner-article}), provided Conjecture \ref{ATR-gen} for the abelian extensions of
$M$ holds true. This point can be manufactured by applying the machinery of \S \ref{sec:quadratic}, \ref{sec:OdaShioda},
\ref{sec:Darmonpoints}, setting $M/F$ to play the role of the extension $K/F_0$ of loc.\,cit. 

Let us sketch the details: let $B$ be the quaternion algebra over $F$ which ramifies precisely at all the archimedean places of $F$ but $v_1$, $v_2$ (over which $M$ is complex) and at the prime ideals $\wp\mid N_E$ which remain inert in $M$. That this is a set of even cardinality is guaranteed by the sign of the functional equation of $L(E/M,s)$. Let $\cO$ be an Eichler order in $B$ of square-free level, divisible exactly by those primes $\wp\mid N_E$ which split in $M$.

Let $R_M$ denote the ring of integers of $M$ and fix a normalized optimal embedding $\varphi_M\in \mathcal E(R_M,\cO)$.
In the notations of \S \ref{sec:quadratic} and \ref{sec:OdaShioda} we have $r=2$ and $\Sigma=\{\pm 1\}$.  Take $\beta$
to be the trivial character  and, granting Conjecture \ref{ATR-gen}, let
$P_\varphi\in E(L_\varphi^\beta)$ denote the Darmon point associated with this choice. Set
\begin{equation}\label{GartnerPoint}
P_M^{\mathrm{Gar}} = \Tr_{L_\varphi^\beta/M}(P_\varphi^\beta) \in E(M).
\end{equation}

It is expected that the N\' eron-Tate height of $P_M^\mathrm{Gar}$ should be related to $L'(E/M, 1)$ while the N\' eron-Tate height of $P_M$ constructed in this paper should be connected to $L'(E/F, \chi_M, 1)$. Hence from the basic quality $$L'(E/M, 1) = L(E/F, 1) L'(E/F, \chi_M ,1),$$ we propose the following conjecture about the relation between $P_M$ and $P_M^\mathrm{Gar}$. Let $$\Omega_{E/F}= \frac{\prod_{\tau: F\hookrightarrow \R}c_{E^\tau}}{\sqrt{\disc(F)}}$$ 
where $c_{E^\tau}$ is either the real period or twice the real period of $E^\tau=E\times_\tau \R$, depending on whether $E^\tau(\zR)$ is connected or not.

\begin{conjecture}
The point $P_M^\mathrm{Gar}$ is of infinite order if and only if $P_M$ is of infinite order and $L(E/F, 1) \ne 0$. Moreover,
$$ P_M^\mathrm{Gar}= 2^s \ell \cdot P_M,$$
where $s$ is an integer which depends on $M$ and $\ell \in \zQ^\times$ satisfies $\ell^2 = \frac{L(E/F, 1)}{\Omega_{E/F}}.$
\end{conjecture}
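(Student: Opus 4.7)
I would attack the conjecture in two stages: first the biconditional about being of infinite order, and then the precise proportionality formula. Since the statement is itself conjectural, the proof will inevitably be conditional on the various Gross--Zagier-type formulas expected to govern Darmon points.

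For the biconditional, I would start from the identity
\[
L(E/M,s) = L(E/F,s)\cdot L(E/F,\chi_M,s).
\]
Under the sign hypotheses ($+1$ for $L(E/F,s)$, $-1$ for $L(E/M,s)$), the order of vanishing of $L(E/F,s)$ at $s=1$ is even and the order of $L(E/F,\chi_M,s)$ is odd. Hence $L'(E/M,1)\ne 0$ is equivalent to the simultaneous conditions $L(E/F,1)\ne 0$ and $L'(E/F,\chi_M,1)\ne 0$. Gartner's analogue of Conjecture \ref{ATR-gen} applied to the ATC extension $M/F$ (under which $P_M^{\mathrm{Gar}}$ is the Darmon point \eqref{GartnerPoint}) predicts that $P_M^{\mathrm{Gar}}$ is non-torsion iff $L'(E/M,1)\ne 0$. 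Inspection of the proof of Theorem \ref{conjecture: the point in E is non-torsion} shows that $P_M$ is non-torsion iff $L'(E/F,\chi_M,1)\ne 0$ (the reference to $L'(E/M,1)\ne 0$ there is equivalent to this only under the extra hypothesis $L(E/F,1)\ne 0$). Combining these characterisations yields the first assertion.

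For the proportionality, I would first reduce to a rank-one computation. Assuming $L'(E/M,1)\ne 0$, Conjecture (BSD) for $E/M$ predicts $\rank E(M)=1$, so both $P_M$ and $P_M^{\mathrm{Gar}}$ are, modulo torsion, rational multiples of a common generator; hence there exists a unique $\mu\in \Q^\times$ with $P_M^{\mathrm{Gar}}=\mu\cdot P_M$ in $E(M)\otimes \Q$. To pin down $\mu$, I would compare N\'eron--Tate heights. Granting the expected Gross--Zagier type formulas for each construction,
\[
\hat h(P_M^{\mathrm{Gar}}) = r_1^{2}\cdot \frac{L'(E/M,1)}{\Omega^{\mathrm{Gar}}}, \qquad \hat h(P_M) = r_2^{2}\cdot \frac{L'(E/F,\chi_M,1)}{\Omega},
\]
for explicit $r_1,r_2\in \Q^\times$ involving degrees of optimal embeddings and Fourier coefficients of $f_0$, and for period quantities $\Omega^{\mathrm{Gar}}$, $\Omega$ built from the differential forms $\omega_f^{\beta}$ and $\omega_{f_0}^{\beta}$ attached to each construction. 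Dividing and using $L'(E/M,1)=L(E/F,1)\cdot L'(E/F,\chi_M,1)$ together with a comparison of periods that identifies $\Omega^{\mathrm{Gar}}/\Omega$ with $\Omega_{E/F}$ up to rational squares gives
\[
\mu^{2} = \frac{\hat h(P_M^{\mathrm{Gar}})}{\hat h(P_M)} = (\text{rational square}) \cdot \frac{L(E/F,1)}{\Omega_{E/F}}.
\]
Since $E/F$ has analytic rank $0$, the BSD formula for $E/F$ together with the Cassels--Tate pairing (forcing $|\Sha(E/F)|$ to be a square in the elliptic curve case) makes $L(E/F,1)/\Omega_{E/F}$ itself a square of a rational $\ell$, and one obtains $\mu = \pm 2^{s}\ell$ for some $s\in \Z$.

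The integer $s$ and the sign are pure bookkeeping, encoding the difference between the trace $\Tr_{L_\varphi^\beta/M}$ in \eqref{GartnerPoint} and the two-step descent used to manufacture $P_M$ (a trace $\Tr_{L_\varphi^\beta/L}$ followed by addition of a complex conjugate to descend from the dihedral closure $\cM$ down to $M$); the powers of $2$ arise from the index $[\Gal(\cM/F_0):\Gal(\cM/M)]=2$ and from the potential disagreement, modulo squares, between the normalisations of $\omega_f^\beta$ and $\omega_{f_0}^\beta$. The principal obstacle to turning this into a theorem is that the Gross--Zagier formulas invoked above are themselves currently out of reach: neither $P_M$ nor $P_M^{\mathrm{Gar}}$ is even known to be algebraic (this is the content of Conjecture \ref{ATR-gen}), so the proof I outline is genuinely conditional on these companion conjectures and not just on BSD.
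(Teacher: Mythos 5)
The statement you are asked to prove is presented in the paper as a \emph{conjecture}, and the paper gives no proof of it: the authors precede it only with a one--paragraph heuristic, observing that the N\'eron--Tate heights of $P_M^{\mathrm{Gar}}$ and $P_M$ are expected to be governed respectively by $L'(E/M,1)$ and $L'(E/F,\chi_M,1)$, and then invoking the factorization $L'(E/M,1)=L(E/F,1)\,L'(E/F,\chi_M,1)$. Your proposal reconstructs exactly this heuristic, so there is no methodological divergence to report. You correctly identify that the rank-one part of the biconditional and the factorization of the $L$-function are the load-bearing pieces; that the proportionality of $P_M^{\mathrm{Gar}}$ and $P_M$ should follow from a height comparison resting on Gross--Zagier-type formulas which are themselves out of reach in this setting; and that the $2^s$ factor is bookkeeping coming from the different descent steps (a single trace for $P_M^{\mathrm{Gar}}$ versus a trace down to $L$ followed by addition of a complex conjugate for $P_M$). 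Your additional observation --- that $P_M$ is really controlled by $L'(E/F,\chi_M,1)$ and that the statement of Theorem \ref{conjecture: the point in E is non-torsion} identifies this with $L'(E/M,1)$ only because $L(E/F,\chi_M,1)=0$ for sign reasons --- is a sharper reading of the paper's own argument.

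One small caveat on your justification for $\ell\in\Q^\times$: BSD in analytic rank $0$ gives $L(E/F,1)/\Omega_{E/F} = \bigl(\prod_{\wp}c_\wp\bigr)\cdot |\Sha(E/F)| / |E(F)_{\mathrm{tors}}|^2$; the Shafarevich--Tate group and the torsion contribute perfect squares, but the Tamagawa product $\prod_\wp c_\wp$ need not be a square in general. Thus ``$L(E/F,1)/\Omega_{E/F}$ is a rational square'' is itself part of what the conjecture is implicitly asserting (or else the non-square part of the Tamagawa product must be absorbed into the undetermined factor $2^s$), and it should be flagged as such rather than derived from Cassels' theorem alone. Since the paper's conjecture asserts $\ell^2 = L(E/F,1)/\Omega_{E/F}$ directly, this is a feature of the conjecture, not a defect of your argument; but a careful exposition would note the gap.
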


\subsection{A numerical example}\label{subsection: numerical examples}
In this section we give the details for the computation of an ATC point on a particular elliptic curve. We  used Sage
\cite{sage} for all the
numerical calculations. We begin by describing the elliptic curve and
the corresponding Hilbert modular form $f_0$, which we will take to be the base change of a modular form $f$ over
$\Q$.

\subsubsection{The curve and the modular form}\label{subsec:quer}
Let $f$ be the (unique up to Galois conjugation) classical newform over $\Q$ of level $40$ and nebentypus
$\varepsilon(\cdot )=\left( \frac{10
}{\cdot}\right)$. It corresponds to the third form of level
$40$ in the table 4.1 of the appendix to \cite{Q}. We see from this table that
the modular abelian variety $A_{f}$ has dimension $4$. Moreover,  it breaks as the
fourth power of an elliptic curve $E/F$, where $F=\Q(\sqrt{2},\sqrt{5})$. Jordi Quer computed an equation
for $E$ using the algorithms of \cite{pep-lario}; a global minimal model  of $E$ is given by:
\begin{equation}\label{eq: equacio}
 y^2 + b_1 xy + b_3 y = x^3 + b_2 x^2 + b_4 x + b_6,\end{equation}
where
\begin{eqnarray*}
 b_1&=& 1-9/2\sqrt{2} + 3\sqrt{5} -1/2\sqrt{10},\\
b_2&=&-15/2 + 13/2\sqrt{2}  -9/2\sqrt{5}  + 5/2\sqrt{10},\\
b_3&=&-11/2 -27/2\sqrt{2} + 17/2 \sqrt{5}+  3/2\sqrt{10},\\
b_4&=& 41/2  +   8\sqrt{2} -15/2\sqrt{5}    -8\sqrt{10},\\
b_6&=&          525/2 +    8\sqrt{2} -13/2\sqrt{5}   -84\sqrt{10} .
\end{eqnarray*}

Let $F_0=\Q(\sqrt{2})$ and let $v_1$ (resp. $v_2$) be the embedding
taking $\sqrt{2}$ to the positive (resp. negative) square root of $2$. Since $E$ is a $\Q$-curve, it is also an
$F_0$-curve. If we set $\alpha={\sqrt{10}+\sqrt{5}+\sqrt{2}}$ then $M=F(\sqrt{\alpha})$ is an ATC extension of $F$.
Since the conductor of $E/F$ is equal to $1$ the sign of the functional equation of $L(E/F,s)$ is $+1$, and the sign of
$L(E/M,s)$ is $-1$. The point
$P_{\mathrm{nt}}\in E(M)$  whose $x$ coordinate is given by
\[
 x=\frac{-3259+ 2126\sqrt{\alpha} -8957\sqrt{\alpha}^2 +5297\sqrt{\alpha}^3 -4989\sqrt{\alpha}^4 +1954\sqrt{\alpha}^5
-743\sqrt{\alpha}^6+ 39\sqrt{\alpha}^7}{72}
\]
is a generator or the Mordell-Weil group of $E(M)$. Conjecture \ref{conjecture: the point in E is
non-torsion} predicts that the point $P_M$ coincides, up to torsion, with a multiple of $P_{\mathrm{nt}}$.
We computed an approximation to $J_\cM\in \C/\Lambda_E$ with an accuracy of $30$ decimal digits using formula \eqref{eq:
explicit formula for J_M}. Let $J_{\mathrm{nt}}\in \C/\Lambda_E$ be a preimage of $P_{\mathrm{nt}}$ by Weierstrass's
uniformization map. Then the following relation
\begin{equation}\label{eq: relation of points}
7\cdot J_\cM-14\cdot J_{\mathrm{nt}} \in\Lambda_E,
\end{equation}
holds up to the computed numerical precision of $30$ digits.  The torsion group
$E(M)_{\mathrm{tors}}$ is isomorphic to $\Z/14\Z$. Observe that  this gives numerical evidence for the fact that
$\eta(J_{M})$ is already a non-torsion point in $E(M)$ in this case. We find a similar relation for
$P_M=\eta(J_\cM+\overline{J_\cM})$:
 \[
7\cdot (J_\cM+\overline{J_\cM})-28\cdot J_{\mathrm{nt}} \in\Lambda_E.
\]
In the rest of the section provide the details about the computation of $J_\cM$,
beginning with those related to compute the Hilbert modular form attached to $E$ over $F_0$.

Let $f_0$ be the base change of $f$ to $F_0$. Denote by $N$  the level of $f$, and
let $A=\Res_{F/F_0}E$, which is a $\GL_2$-variety over $F_0$. By Milne's formula \cite[Proposition 1]{Mi3} it has
conductor $\mathrm{cond}(A/F_0)=(25)$. By the Shimura--Taniyama conjecture for $\GL_2$-type varieties $A$ is isogenous
to $A_{f_0}$, which has conductor $N^2$. Then we see that  $N=(5)$ and that $f_0$ belongs to $S_2(\Gamma_\psi(N))$,
where $\psi$ is the restriction of $\varepsilon$ to $\Gal(\Qb/F_0)$. By identifying $\varepsilon$ with a character
$\A_\Q^\times\ra\{\pm 1\}$ by means of class field theory, $\psi$ can be identified  with the id\`ele
character $\varepsilon\circ \mathrm{Nm}_{F_0/\Q}\colon \A_{F_0}^\times\ra\{\pm 1\}$.

The Fourier coefficients of $f=\sum_{n\geq 1} c_n q^n$ can be explicitly computed in Sage. Let us see how to compute
the  coefficients of $f_0$ in terms of the $c_n$'s. The field $\Q_{f}=\Q(\{c_n\})$  turns out to be $\Q(\sqrt
2,\sqrt{-3})$. Let $\Gal(\Q_{f}/\Q)=\{1,\sigma,\tau,\sigma\tau\}$, where $\sigma$ denotes the automorphism that fixes
$\Q(\sqrt{-3})$ and $\tau$ the one that fixes $\Q(\sqrt{2})$.  The
inner twists of $f$ are given by
$$\chi_\sigma=\varepsilon_{\Q(\sqrt{5})},\ \chi_\tau=\varepsilon_{\Q(\sqrt{10})},\ \chi_{\sigma\tau}=
\chi_\sigma\chi_\tau=\varepsilon_{\Q(\sqrt{2})},$$
 where $\varepsilon_{\Q(\sqrt{a})}$ denotes the Dirichlet character
corresponding to  $\Q(\sqrt{a})/\Q$. Recall that  inner twists are defined by the
relations $f^\rho=\chi_\rho\otimes f$. This is
also equivalent to say that $c_p^\rho=\chi_\rho(p)c_p$ for all $p$ not dividing the level of $f$ (see
\cite{Ri} for more details).
\begin{lemma}\label{lemma 1}
 $L(f_0,s)=L(f,s) L(f^{\sigma\tau},s).$
\end{lemma}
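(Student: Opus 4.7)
The plan is to combine the general factorization of an $L$-function under cyclic base change with the inner twist relation listed just before the lemma.

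First, I would recall the base change identity. Since $F_0/\Q$ is a quadratic extension cut out by the quadratic character $\varepsilon_{\Q(\sqrt 2)}\colon \A_\Q^\times\to\{\pm 1\}$, and $f_0$ is by definition the base change of $f$ to $F_0$, the standard factorization of the $L$-function of an automorphic induction (or equivalently, the factorization of $\Ind_{F_0}^{\Q}\pi_{f_0}\simeq \pi_f\oplus (\pi_f\otimes\varepsilon_{\Q(\sqrt 2)})$) yields
\begin{equation*}
L(f_0,s)=L(f,s)\cdot L(f\otimes\varepsilon_{\Q(\sqrt 2)},s).
\end{equation*}
This is the arithmetic content of cyclic base change for $\GL_2$; at unramified primes it is the identity expressing how a prime $p$ of $\Q$ splits, is inert, or ramifies in $F_0$ in terms of the Euler factors.

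Second, I would invoke the inner twist relation of $f$. The excerpt records that among the three nontrivial inner twists of $f$, the one attached to $\sigma\tau\in\Gal(\Q_f/\Q)$ is exactly $\chi_{\sigma\tau}=\varepsilon_{\Q(\sqrt 2)}$, meaning $f^{\sigma\tau}=\chi_{\sigma\tau}\otimes f=\varepsilon_{\Q(\sqrt 2)}\otimes f$. Consequently
\begin{equation*}
L(f\otimes\varepsilon_{\Q(\sqrt 2)},s)=L(f^{\sigma\tau},s),
\end{equation*}
and substituting this into the previous display gives the claimed factorization.

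The only real subtlety, and what I expect to be the main point to check, is that the factorization $L(f_0,s)=L(f,s)L(f\otimes\varepsilon_{\Q(\sqrt 2)},s)$ holds also at the ramified primes, i.e.\ at the prime $2$ (which ramifies in $F_0$) and at the prime $5$ (which divides the level of $f$). Here one verifies directly that the local Euler factor of $f_0$ at the unique prime of $F_0$ above the given rational prime equals the product of the corresponding local Euler factors on the right hand side; for $p=2$ this follows since $\varepsilon_{\Q(\sqrt 2)}$ is ramified at $2$ so its local $L$-factor is trivial, and for $p=5$ one uses that $5$ splits in $F_0$, so the base-change Euler factor is just the product $L_5(f,s)L_5(f\otimes\varepsilon_{\Q(\sqrt 2)},s)$. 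Nothing else beyond these local verifications is needed.
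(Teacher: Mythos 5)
Your main argument is exactly the paper's: invoke the base change factorization $L(f_0,s)=L(f,s)\,L(f\otimes\varepsilon_{\Q(\sqrt 2)},s)$ and then substitute the inner twist identity $\chi_{\sigma\tau}=\varepsilon_{\Q(\sqrt 2)}$ to replace the twisted factor by $L(f^{\sigma\tau},s)$. That is precisely the one-line chain of equalities given in the paper, so the core of your proof is correct and identical in spirit.

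One factual slip in your supplementary discussion of ramified primes: you claim that $5$ splits in $F_0=\Q(\sqrt 2)$, but in fact $\left(\tfrac{2}{5}\right)=-1$, so $5$ is inert in $F_0$. Indeed the paper relies on exactly this (via $\chi_{\sigma\tau}(5)=\varepsilon_{\Q(\sqrt 2)}(5)=-1$) in the subsequent computation of $a_{\mathfrak p}$ at $p=5$, where the single prime $\mathfrak p$ of $F_0$ above $5$ has norm $25$ and one checks that $L_{\mathfrak p}(f_0,s)=(1-c_5 5^{-s})^{-1}(1+c_5 5^{-s})^{-1}=(1-c_5^2\,\zNm(\mathfrak p)^{-s})^{-1}$. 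The factorization you state still holds at $5$, but for that reason, not because $5$ splits. Since this remark is an add-on and the base-change factorization is a theorem valid at all primes, your proof as a whole is not affected.
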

\begin{proof}
Indeed $f_0$ is the base change of $f$ to $F_0=\Q(\sqrt 2)$. Then,
$$L(f,s)=L(f,s)L(f\otimes
\varepsilon_{\Q(\sqrt{2})},s)=L(f,s)L(f\otimes
\chi_{\sigma\tau},s)=L(f,s)L(f^{\sigma\tau},s).$$
\end{proof}
The $L$-series of $f_0$ is of the form
\begin{equation}\label{eq: L-series of f}
 L(f_0,s)=\prod_{\p\nmid N} (1-a_\p \Norm(\p)^{-s}+\psi(\p)\Norm(\p)^{1-2s})^{-1}\prod_{\p\mid N}(1-a_\p
\Norm(\p)^{-s})^{-1},
\end{equation}

for some coefficients $a_\p$, indexed by the primes in $F_0$.
\begin{lemma}\label{lemma: a_n}
  Let $\p$ be a prime in $F_0$, and let $p=\p\cap \Z$. Then
\begin{equation*}
a_\p=
\begin{cases}
c_p &\text{if $\varepsilon_{\Q(\sqrt{2})}(p)=1\text{ and }\ p\neq 5$,}
\\ c_p^2-2\,\varepsilon(p)\, p & \text{if $\varepsilon_{\Q(\sqrt{2})}(p)=-1 \text{ and }\ p\neq 5$,}\\
 c_p^2 & \text{if $p=5$,}\\
c_p+c_p^{\sigma\tau} &\text{if $p=2$}.
\end{cases}
\end{equation*}

\end{lemma}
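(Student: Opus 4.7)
The plan is to match local Euler factors prime by prime in the identity $L(f_0,s) = L(f,s)\,L(f^{\sigma\tau},s)$ provided by Lemma \ref{lemma 1}. The key auxiliary input is the inner twist relation
\[
 c_p^{\sigma\tau} = \chi_{\sigma\tau}(p)\,c_p = \varepsilon_{\Q(\sqrt{2})}(p)\,c_p,
\]
valid for every prime $p$ coprime to the conductor $8$ of $\chi_{\sigma\tau}$. It therefore applies at every odd prime, including $p=5$, despite the fact that $5$ divides the level of $f$.

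Before the case analysis I would record two preparatory facts about the nebentypus $\psi = \varepsilon\circ\Norm_{F_0/\Q}$. Writing $\varepsilon = \left(\tfrac{2}{\cdot}\right)\left(\tfrac{5}{\cdot}\right)$ and using that $F_0=\Q(\sqrt 2)$ is the fixed field of $\ker\left(\tfrac{2}{\cdot}\right)$, the factor $\left(\tfrac{2}{\cdot}\right)\circ\Norm_{F_0/\Q}$ is trivial, so $\psi = \left(\tfrac{5}{\cdot}\right)\circ\Norm_{F_0/\Q}$ has conductor $(5)$ (consistently with $N=(5)$). Moreover one verifies $\psi(\mathfrak p)=\varepsilon(p)$ for each odd prime $p\neq 5$ and $\mathfrak p\mid p$: automatic when $p$ is inert, and using $\left(\tfrac{2}{p}\right)=1$ when $p$ splits in $F_0$.

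The case analysis then proceeds by the splitting behavior of $p$. When $p\neq 2,5$ splits, $c_p^{\sigma\tau}=c_p$ and the right-hand Euler factor at $p$ equals $(1-c_p p^{-s}+\varepsilon(p)p^{1-2s})^{-2}$; comparing with the product of the two identical good-prime local factors $(1-a_{\mathfrak p}p^{-s}+\varepsilon(p)p^{1-2s})^{-1}$ of $f_0$ at the primes above $p$ forces $a_{\mathfrak p}=c_p$. When $p\neq 2,5$ is inert, $c_p^{\sigma\tau}=-c_p$ and the product
\[
 (1-c_p p^{-s}+\varepsilon(p)p^{1-2s})(1+c_p p^{-s}+\varepsilon(p)p^{1-2s}) = 1-(c_p^2-2\varepsilon(p)p)p^{-2s}+p^{2-4s}
\]
must equal the good-prime factor $(1-a_{\mathfrak p}p^{-2s}+p^{2-4s})$ at $\mathfrak p=(p)$, yielding $a_{\mathfrak p}=c_p^2-2\varepsilon(p)p$. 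For $p=5$, which is inert in $F_0$ and divides $N$, the inner twist $c_5^{\sigma\tau}=-c_5$ is still available ($5\nmid 8$), and $(1-c_5 5^{-s})(1+c_5 5^{-s})=1-c_5^2\cdot 25^{-s}$ matches the bad-prime factor $(1-a_{\mathfrak p}\cdot 25^{-s})$ at $\mathfrak p=(5)$, producing $a_{\mathfrak p}=c_5^2$. Finally, for $p=2$, which ramifies in $F_0$ as $(\sqrt 2)^2$ and divides the level of $f$ but not that of $f_0$, the inner twist is unavailable, and I would simply expand $(1-c_2 2^{-s})(1-c_2^{\sigma\tau}2^{-s})$ and compare with the good-prime factor $(1-a_{\mathfrak p}2^{-s}+\psi(\mathfrak p)2^{1-2s})$ at $\mathfrak p=(\sqrt 2)$, reading off $a_{\mathfrak p}=c_2+c_2^{\sigma\tau}$.

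The main point that requires care is the treatment of $p=2$: the fact that $2\nmid N$ even though $2$ divides the level of $f$ reflects the cancellation $\left(\tfrac{2}{\cdot}\right)\circ\Norm_{F_0/\Q}=1$ recorded above, and without this observation one would wrongly place the local factor at $(\sqrt 2)$ in the bad-prime form. Matching the quadratic coefficient in the Euler factors at $p=2$ also yields the consistency relation $c_2 c_2^{\sigma\tau}=2\psi(\sqrt 2)=2\left(\tfrac{5}{2}\right)=-2$, which is automatic from Lemma \ref{lemma 1} but provides a sanity check against the explicit values of $c_2$ and $c_2^{\sigma\tau}$ computed from the $q$-expansion of $f$.
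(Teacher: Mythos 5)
Your argument follows the paper's proof essentially verbatim: match local Euler factors using Lemma \ref{lemma 1}, case by case according to the splitting behaviour of $p$ in $F_0$, with the inner twist relation driving the cases $p\neq 2$. The observations you add (the cancellation $\left(\frac{2}{\cdot}\right)\circ\Norm_{F_0/\Q}=1$ explaining why $N=(5)$ rather than $(10)$, and the reason the inner twist relation persists at $p=5$ because $\chi_{\sigma\tau}$ is unramified there) are correct and genuinely clarify points the paper leaves implicit.

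One preparatory claim is misstated, though: you assert that $\psi(\mathfrak p)=\varepsilon(p)$ for odd $p\neq 5$ and say this is \emph{automatic when $p$ is inert}. That is false in the inert case. If $p$ is inert in $F_0$ then $\Norm(\mathfrak p)=p^2$, so $\psi(\mathfrak p)=\varepsilon(p^2)=\varepsilon(p)^2=1$, whereas $\varepsilon(p)$ need not equal $1$ (e.g.\ $p=11$ is inert in $\Q(\sqrt 2)$ and $\varepsilon(11)=\left(\frac{2}{11}\right)\left(\frac{5}{11}\right)=-1$). The identity $\psi(\mathfrak p)=\varepsilon(p)$ holds precisely in the split case, where $\Norm(\mathfrak p)=p$. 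Fortunately this slip does not propagate: in your treatment of the inert case you write the good Euler factor as $(1-a_{\mathfrak p}p^{-2s}+p^{2-4s})$, i.e.\ you implicitly (and correctly) use $\psi(\mathfrak p)=1$ there, consistent with the paper. Replace the blanket claim by the two separate computations $\psi(\mathfrak p)=\varepsilon(p)$ (split) and $\psi(\mathfrak p)=1$ (inert), and the argument is clean.
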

\begin{proof} For a rational prime $p$ let
$L_p(f_0,s)=\prod_{\p\mid p}L_\p(f,s)$ denote the product of local factors for the primes $\p\mid p$. For $p\neq 2,5$
Lemma
\ref{lemma 1} gives that
\begin{equation}\label{eq: local factor}
 L_p(f_0,s)=(1-c_pp^{-s}+\varepsilon(p) p^{1-2s})^{-1}(1-c_p^{\sigma\tau} p^{-s}+\varepsilon(p)
p^{1-2s})^{-1}.
\end{equation}

If $\varepsilon_{\Q(\sqrt{2})}(p)=1$ then $p$ splits in $F_0$ so there are two primes  $\p_1,\p_2 $ dividing $p$, each
one having norm $p$. On the other hand, $c_p^{\sigma\tau}=c_p$ (because
$\chi_{\sigma\tau}=\varepsilon_{\Q(\sqrt{2})}(p)$), and
$\psi(\p_i)=\varepsilon(\Norm(\p_i))=\varepsilon(p)$. Comparing \eqref{eq: local factor} and \eqref{eq: L-series of
f} we see that $a_{\p_i}=c_p$.

If $\varepsilon_{\Q(\sqrt{2})}(p)=-1$ then there is only one prime $\p$ dividing $p$, and
$\psi(\p)=\varepsilon(\Norm(\p))=1$. On the other hand $c_p^{\sigma\tau}=-c_p$, so
\begin{eqnarray*}
L_p(f_0,s)&=&(1-c_pp^{-s}+\varepsilon(p) p^{1-2s})^{-1}(1+c_p p^{-s}+\varepsilon(p)
p^{1-2s})^{-1}\\
&=&1+2\varepsilon(p)p^{1-2s}-c_p^2 p^{-2s}+p^{2-4s}\\
&=&1+(2\,\varepsilon(p)\,p-c_p^2) {\Norm(\p)}^{-s}+{\Norm(\p)}^{1-2s},
\end{eqnarray*}
and we see that $a_p=c_p^2-2\,\varepsilon(p)\, p$.

If $p=5$, then $\chi_{\sigma\tau}(p)=-1$ so $c_p^{\sigma\tau}=-c_p$. Since $5$ divides the level of $f$ we have that
\[
 L_p(f_0,s)=(1-c_pp^{-s})^{-1}(1+c_pp^{-s})^{-1}=(1-c_p^2{\Norm(\p)}^{-s})^{-1},
\]
 so that $a_\p=c_p^2$.

Finally, if $p=2$ then $(p)=\p^2$ in $F_0$. But $\p$ does not divide the level of $f_0$ and $\psi(\p)=-1$ (because $\p$
 is inert in $F$), so $L_\p(f_0,s)=L_p(f_0,s)$ is of the form
\begin{equation}\label{eq: p=5 primera}
 L_\p(f_0,s)=(1-a_\p p^{-s}-p^{1-2s})^{-1}.
\end{equation}
On the other hand, $p$ divides the level of $f$, so that
\begin{equation}\label{eq: p=5 segona}
 L_\p(f_0,s)=(1-c_pp^{-s})(1-c_p^{\sigma\tau} p^{-s})=(1-(c_p+c_p^{\sigma\tau})p^{-s}+c_pc_p^{\sigma\tau} p^{-2s}).
\end{equation}
It turns out that $c_pc_p^{\sigma\tau}=-p$, so \eqref{eq: p=5 primera} and
\eqref{eq: p=5 segona} match and we see that $a_\p=c_p+c_p^{\sigma\tau}$.
\end{proof}

\subsubsection{Computation of the ATC point}
 Let
$e=\sqrt{2}-1$ be a fundamental unit of $F_0$.  Observe that $e_1=v_1(e)>0$ and $e_2=v_2(e)<0$. Let $\beta\colon\{\pm
1\}\ra \{\pm 1\}$ be the nontrivial character. The differential $\omega_{f_0}^\beta$ is then the one corresponding to
\[
 \omega_{f_0}^\beta=\frac{-4\pi^2}{\sqrt{8}}\left(f_0(z_1,z_1)dz_1dz_2-f_0(e_1z_1,e_2\overline{z}
_2)d(e_1z_1)d(e_2\overline { z } _2)\right).
\]
As for $W_N(\omega_{f_0}^\beta)$,  it is easy to compute because $W_N(f_0)=\lambda_N \overline{f}_0$, where
the pseudoeigenvalue  $\lambda_N$   is equal to $a_{(N)}/N=\frac{-1+2\sqrt{-6}}{5}$. Therefore
\[
 W_N(\omega_{f_0}^\beta )=\frac{(4-8\sqrt{-6})\pi^2}{5\sqrt{8}}\left(\overline{f}_0(z_1,
z_1)dz_1dz_2-\overline{f}_0(e_1z_1 , e_2\overline{z}
_2)d(e_1z_1)d(e_2\overline { z } _2)\right).
\]
 and we have completely determined $\alpha_{f_0}^\beta=\omega_{f_0}^\beta+W_N(\omega_{f_0}^\beta)$.

Recall that $M$ is not Galois over $F_0$, and that the diagram of subfields of its Galois closure $\mathcal M$ is
the one given in \eqref{diagram of subfields}. The
 ATR field $K$ is easily computed to be $K=F_0(\omega)$, where $\omega^2 + (\sqrt{2} + 1)\omega + 3\sqrt{2} + 4=0$. Here
we remark that $K$ is complex under the embeddings extending $v_1$, and it is real under the embeddings extending
$v_2$.  The discriminant of $L/K$ is an ideal $\mathfrak N$ which in this case satisfies that
$\mathrm{Nm}_{K/F_0}(\mathfrak{N})=N$. Therefore the ideal $c$ of Proposition \ref{proposition: main results about the
fields} is equal to $1$ for this example. Let $\varphi\colon R_K\hookrightarrow \cO$
be the
optimal embedding of the maximal order $R_K$ into the Eichler order of conductor $N$ of $M_2(F_0)$ given by
\[
 \varphi(\omega)=\left(
\begin{array}{cc}
-\sqrt{2}+2 & -2\\
5 & -3
\end{array}
\right).
\]
By Proposition \ref{proposition: main results about the fields} we see that $L$ is contained in $L_\varphi^\beta$. But
$L_\varphi^\beta$ is a quadratic extension of the narrow Hilbert class field of $K$. Since $K$ turns out to have narrow
class number $1$, we see that $L_\varphi^\beta$ is a quadratic extension of $K$, hence equals $L$. This means
that $H_\varphi^\beta=\cM$, so that according to Conjecture \ref{ATR-gen} the point $P_\varphi^\beta$ is  defined
over $\cM$.

The fixed point of $K^\times$ under $\varphi$ (with respect to $v_1$) is
\[
 z_1\simeq 0.358578643762691 + 0.520981147679366\cdot i
\]
The unit
\[
e_K=(-10\sqrt{2} + 14)w + 7\sqrt{2} - 11
\] satisfies that $\mathrm{Nm}_{K/F_0}(e_K)=1$ and generates the group of such units, so that
\[
 \gamma_\varphi=\varphi(e_K)=\left(
\begin{array}{cc}
 -27\sqrt{2} + 37 & 20\sqrt{2} - 28\\
 -50\sqrt{2} + 70 & 37\sqrt{2} - 53
\end{array}
\right)
\]
and $$\gamma_\varphi\cdot \infty=\frac{-27\sqrt{2} + 37}{-50\sqrt{2} + 70}=\frac{4\sqrt{2}+11}{10}.$$ To compute $J_\cM$
 we need to evaluate the $3$-limits integral
\begin{equation}\label{eq: ATR point}
J_\cM= \int^{z_1}\int_\infty^{\gamma_\varphi\cdot \infty}
\alpha_{f_0}^\beta= \int^{{z_1}}\int_\infty^{\frac{4{\sqrt{2}}+11}{10}} \alpha_{f_0}^\beta.
\end{equation}
The next step is to use  properties \eqref{eq: property 1 of the integral}, \eqref{eq:
property 2 of the integral},  and \eqref{eq: property 3 of the integral} to transform \eqref{eq: ATR point} into a sum
of usual $4$-limit integrals, because they can be numerically computed by  integrating (a truncation of) the Fourier
series of
$\alpha_{f_0}^\beta$. Observe that $\alpha_{f_0}^\beta$ is
invariant under $W_N=W_{(5)}$, so we have the following additional invariance property:
\begin{equation}
 \int^{x}\int_y^z\alpha_{f_0}^\beta=\int^{\frac{-1}{5x}}\int_{\frac{-1}{5y}}^{\frac{-1}{5z}}\alpha_{f_0}^\beta.
\end{equation}
We will also use the following matrices, both belonging
to $\Gamma_\psi(N)$:
\[
G=\left(
\begin{array}{cc}
4{\sqrt{2}} + 11 & -3{\sqrt{2}} + 5\\
     10 &-6{\sqrt{2}} + 9
\end{array}
\right),\ \ \
H=\left(
\begin{array}{cc}
-15{\sqrt{2}} + 21   &  -{\sqrt{2}} - 1\\
   -35{\sqrt{2}} + 50    &       1
\end{array}
\right).
\]
Since $\gamma_\varphi\cdot \infty=G\cdot \infty$ and $G\cdot 0 =\frac{-3{\sqrt{2}} + 5}{-6{\sqrt{2}} +
9}={\sqrt{2}}/3+1$, we have that
\begin{equation}\label{eq: first part}
\begin{split}
 \int^{{z_1}}\int_\infty^{\gamma_\varphi\cdot\infty}\alpha_{f_0}^\beta=&\int^{{z_1}}\int_\infty^{G\cdot\infty}\alpha_{
f_0 }^\beta=\int^{ {z_1}}\int_\infty^{
G\cdot 0}\alpha_{f_0}^\beta+\int^{{z_1}}\int_{G\cdot 0}^{G\cdot\infty}\alpha_{f_0}^\beta\\ =&\int^{{z_1}}\int_\infty^{
{\sqrt{2}}/3+1}\alpha_{f_0}^\beta+\int^{G^{-1}\cdot{z_1}}\int_{ 0}^{\infty}\alpha_{f_0}^\beta.
\end{split}
\end{equation}
Now, since $H\cdot\infty=\frac{-15{\sqrt{2}} + 21}{-35{\sqrt{2}} + 50}=\frac{-3{\sqrt{2}}}{10} $ and  $H\cdot
0=-{\sqrt{2}}-1$ we have that
\begin{equation}\label{eq: second part}
\begin{split}
\int^{{z_1}}
\int_\infty^{{\sqrt{2}}/3+1}&\alpha_{f_0}^\beta=\int^{{z_1}-1}\int_\infty^{{\sqrt{2}}/3}\alpha_{f_0}^\beta=\int^{\frac{
-1}{ 5({z_1}-1)}} \int_0^{\frac{-3{\sqrt{2}}}{10}}\alpha_{f_0}^\beta
\\
=& \int^{\frac{-1}{5({z_1}-1)}}\int_0^{H\cdot\infty}\alpha_{f_0}^\beta = \int^{\frac{-1}{5({z_1}-1)}}\int_0^{H\cdot
0}\alpha_{f_0}^\beta+\int^{\frac{-1}{5({z_1}-1)}}\int_{H\cdot 0}^{H\cdot \infty}\alpha_{f_0}^\beta\\=&
\int^{\frac{-1}{5({z_1}-1)}}\int_0^{-{\sqrt{2}}-1}\alpha_{f_0}^\beta+\int^{H^{-1}\cdot\frac{-1}{5({z_1}-1)}}\int_{0}^{
\infty} \alpha_{f_0}^\beta\\
=&\int^{\frac{-1}{5({z_1}-1)}}\int_0^{\infty}\alpha_{f_0}^\beta+\int^{\frac{-1}{5({z_1}-1)}}\int_\infty^{-{\sqrt{2}}-1
}\alpha_{ f_0}^\beta+\int^{ H^{-1}\cdot\frac{
-1 } { 5({z_1}-1)}}\int_{0}^{\infty}\alpha_{f_0}^\beta\\
=&\int^{\frac{-1}{5({z_1}-1)}}\int_0^{\infty}\alpha_{f_0}^\beta+\int^{\frac{-1}{5({z_1}-1)}+{\sqrt{2}}+1}\int_\infty^{
0}\alpha_ {f_0}^\beta+\int^{ H^{-1}\cdot\frac{
-1 } { 5({z_1}-1)}}\int_{0}^{\infty}\alpha_{f_0}^\beta\\
=&\int_{\frac{-1}{5({z_1}-1)}+{\sqrt{2}}+1}^{\frac{-1}{5({z_1}-1)}}\int_0^{\infty}\alpha_{f_0}^\beta+\int^{H^{-1}
\cdot\frac{
-1 } { 5({z_1}-1)}}\int_{0}^{\infty}\alpha_{f_0}^\beta
\end{split}
\end{equation}
Now, putting together \eqref{eq: first part} and \eqref{eq: second part} we have that
\begin{equation}
 \begin{split}
   \int^{{z_1}}\int_\infty^{\gamma_\varphi\cdot\infty}\alpha_{f_0}^\beta=&\int^{G^{-1}\cdot{z_1}}\int_{
0}^{\infty}\alpha_{f_0}^\beta+\int_{\frac{-1}{5({z_1}-1)}+{\sqrt{2}}+1}^{\frac{-1}{5({z_1}-1)}}\int_0^{\infty}\alpha_{
f_0} ^\beta+\int^{H^{-1} \cdot\frac{
-1 } { 5({z_1}-1)}}\int_{0}^{\infty}\alpha_{f_0}^\beta\\
=&\int^{\frac{-1}{5G^{-1}\cdot{z_1}}}\int_{\infty}^{0}\alpha_{f_0}^\beta+\int_{\frac{-1}{5({z_1}-1)}+{\sqrt{2}}+1}^{
\frac{-1}{ 5({z_1}-1)}}
\int_0^ { \infty } \alpha_{f_0}^\beta+\int^{H^{-1}\cdot\frac{-1} { 5({z_1}-1)}}\int_{0}^{\infty}\alpha_{f_0}^\beta\\
=&\int_{\frac{-1}{5({z_1}-1)}+{\sqrt{2}}+1}^{\frac{-1}{5({z_1}-1)}}
\int_0^ { \infty }\alpha_{f_0}^\beta +\int_{\frac{-1}{5G^{-1}\cdot{z_1}}}^{H^{-1}\cdot\frac{-1} {
5({z_1}-1)}}\int_{0}^{\infty}\alpha_{f_0}^\beta
 \end{split}
\end{equation}
Now both of these integrals can be easily computed, because for $x,y\in \mathcal H$ one has that
\[
 \int_x^y\int_0^\infty=\int_x^y\int_0^{i/\sqrt{5}}+\int_x^y\int_{i/\sqrt{5}}^\infty=\int_{\frac{-1}{5x}}^{\frac{-1}{5y}}
\int_\infty^{ i/\sqrt { 5 } } +\int_x^y\int_{i/\sqrt{5}}^\infty,
\]
which are integrals with all of their limits lying in $\mathcal H$ and they can be computed by integrating term by
term the Fourier expansion.

Let $\Lambda_1$ and $\Lambda_2$ be the period lattices of $E$ with respect to $v_1$ and
$v_2$, and denote by $\Omega_1^+ , \Omega_2^+ $
the real periods  and $\Omega_1^-$, $\Omega_2^-$
the imaginary periods. Using the above limits we integrated the truncation of the Fourier expansion of
$\alpha_{f_0}^\beta$ up to ideals of norm
$160000$ obtaining
\begin{eqnarray*}
J_\cM= {\left(\Omega_2^-\right)^{-1}}\int^{z_1}\int_\infty^{\gamma_\varphi\infty} \alpha_{f_0}^\beta
&\simeq&
6.1210069519472105302223690235\\ &+& i\cdot 5.4381903029486320686211994460.
\end{eqnarray*}
Recall that  $J_{\mathrm{nt}}$ stands for the logarithm of $P_{\mathrm{nt}}$ in $\C/\Lambda_E$. The actual value is
$$J_{\text{nt}}\simeq 3.3835055058970249460140888086 + i\cdot 2.7190951514743160343105997232.$$

We have that
\begin{eqnarray*}
  7\cdot J_\cM -14\cdot J_{\text{nt}} +\Omega_1^+\simeq
  3.742356\cdot 10^{-27}
- i\cdot  3.23117\cdot 10 ^{-27},
\end{eqnarray*}
which is the numerical evidence for the fact that relation \eqref{eq: relation of points} holds and that, up to torsion,
$\eta(J_\cM)$ equals $2P_{\text{nt}}$.

\end{document}